\newtheoremstyle{neu_thm}
{13pt}       
{8pt}      
{\itshape}  
{}          
{\bfseries} 
{.}         
{.5em}      
{}          
\newtheoremstyle{neu_defn}
{13pt}       
{8pt}      
{}  
{}          
{\bfseries} 
{.}         
{.5em}      
{}          
\newtheorem{thm}{Theorem}[section]
\newtheorem{cor}[thm]{Corollary}
\newtheorem{lem}[thm]{Lemma}
\newtheorem{prop}[thm]{Proposition}
\theoremstyle{neu_defn}
\newtheorem{defn}[thm]{Definition}
\newtheorem{rem}[thm]{Remark}
\newtheorem{prob}[thm]{Problem}
\titleformat{\section}{\normalfont\bfseries\centering}{\thesection.}{.25em}{}
\titleformat{\subsection}{\normalfont\bfseries}{\thesubsection.}{.25em}{}
\titlespacing{\section}{0pt}{*5}{*1.5}
\titlespacing{\subsection}{0pt}{*4}{*0.5}
\numberwithin{equation}{section}
\newcommand{\ol}{\overline}
\newcommand{\wt}{\widetilde}
\newcommand{\wh}{\widehat}
\renewcommand{\emptyset}{\varnothing}
\newcommand{\rmref}[1]{{\rm\ref{#1}}}
\newcommand{\diag}{\operatorname{diag}}
\newcommand{\braces}[1]{{\rm (}#1{\rm )}}
\newcommand{\R}{\ensuremath{\mathbb R}}    
\newcommand{\C}{\ensuremath{\mathbb C}}    
\newcommand{\Q}{\ensuremath{\mathbb Q}}    
\newcommand{\N}{\ensuremath{\mathbb N}}    
\newcommand{\Z}{\ensuremath{\mathbb Z}}    
\newcommand{\calD}{\mathcal D}         
\newcommand{\calF}{\mathcal F}         
\newcommand{\calG}{\mathcal G}
\newcommand{\calL}{\mathcal L}
\newcommand{\calS}{\mathcal S}
\newcommand{\la}{\lambda}
\newcommand{\veps}{\varepsilon}
\newcommand{\mat}[4]
{
   \begin{pmatrix}
      #1 & #2\\
      #3 & #4
   \end{pmatrix}
}
\newcommand{\smallmat}[4]{\left(\begin{smallmatrix}#1 & #2\\#3 & #4\end{smallmatrix}\right)}
\newcommand{\linspan}{\operatorname{span}}
\newcommand{\VMO}{\operatorname{VMO}}
\newcommand{\VMOli}{\operatorname{VMO}_{\rm loc}^\infty}
\newcommand{\BMO}{\operatorname{BMO}}
\newcommand{\fq}{\frac 1 {|Q|}}
\newcommand{\essinf}{\operatorname{ess\,inf}}
\newcommand{\sinc}{\operatorname{sinc}}
\newcommand{\bA}{{\mathbf A}}
\newcommand{\bR}{{\mathbf R}}
\newcommand{\bM}{{\mathbf M}}
\newcommand{\GL}{\operatorname{GL}}
\newcommand{\SL}{\operatorname{SL}}
\renewcommand{\H}{\mathbb H}
\begin{document}
\title[A Balian-Low Theorem for Subspaces]{A Balian-Low Theorem for Subspaces}

\author[A. Caragea]{Andrei Caragea}
\address{{\bf A.~Caragea:} KU Eichst\"att-Ingolstadt, Mathematisch-Geographische Fakult\"at, Ostenstra\ss e 26, Kollegiengeb\"aude I Bau B, 85072 Eichst\"att, Germany}
\email{andrei.caragea@gmail.com}
\urladdr{http://www.ku.de/?acaragea}

\author[D.G. Lee]{Dae Gwan Lee}
\address{{\bf D.G.~Lee:} KU Eichst\"att-Ingolstadt, Mathematisch-Geographische Fakult\"at, Ostenstra\ss e 26, Kollegiengeb\"aude I Bau B, 85072 Eichst\"att, Germany}
\email{daegwans@gmail.com}

\author[G.E. Pfander]{G\"otz E.~Pfander}
\address{{\bf G.E.~Pfander:} KU Eichst\"att-Ingolstadt, Mathematisch-Geographische Fakult\"at, Ostenstra\ss e 26, Kollegiengeb\"aude I Bau B, 85072 Eichst\"att, Germany}
\email{pfander@ku.de}
\urladdr{http://www.ku.de/?pfander}

\author[F. Philipp]{Friedrich Philipp}
\address{{\bf F.~Philipp:} KU Eichst\"att-Ingolstadt, Mathematisch-Geographische Fakult\"at, Ostenstra\ss e 26, Kollegiengeb\"aude I Bau B, 85072 Eichst\"att, Germany}
\email{fmphilipp@gmail.com}
\urladdr{http://www.ku.de/?fmphilipp}


\begin{abstract}
We extend the Balian-Low theorem to Gabor subspaces of $L^2(\R)$ by involving the concept of additional time-frequency shift invariance. We prove that if a Gabor system on a lattice of rational density is a Riesz sequence generating a subspace which is invariant under an additional time-frequency shift, then its generator cannot decay fast simultaneously in time and frequency.
\end{abstract}

\subjclass[2010]{42C15, 42C30, 30H35}

\keywords{Balian-Low theorem, Amalgam Balian-Low theorem, Additional shift invariance, Gabor frames, Time-frequency analysis, VMO (vanishing mean oscillation) functions}

\maketitle
\thispagestyle{empty}

%

\section[\hspace*{1cm}Introduction]{Introduction}

The Balian-Low theorem is an uncertainty principle in time-frequency analysis which in its original form states that a generator of a Gabor orthonormal basis of the space of square integrable functions on the real line cannot be well-localized simultaneously in time and frequency.

\begin{thm}[\cite{balian,low}]\label{t:balian-low}
If 
the functions $e^{2 \pi i n x} g(x-m)$, $(m,n) \in \Z\times\Z$, form an orthonormal basis of $L^2(\R)$, then
\begin{align}\label{eqn:uncertainty-product}
  \Big(\int |x-\alpha|^2 |g (x)|^2 \, dx \Big) \cdot \Big(\int |\omega-\beta|^2 |\widehat{g} (\omega)|^2 \, d\omega \Big) =\infty ,\quad \alpha,\beta\in\R .
\end{align}
\end{thm}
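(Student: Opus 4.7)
The plan is to proceed by contradiction via the classical ``commutator versus orthonormal expansion'' argument. First I would reduce to the centered case $\alpha=\beta=0$: replacing $g$ by $e^{-2\pi i\beta x}g(x+\alpha)$ preserves the orthonormal basis property (the integer lattice is invariant under such changes, up to an overall unimodular phase) and transforms the two moment integrals in \eqref{eqn:uncertainty-product} into the ones centered at the origin. So assume both $\int x^2|g(x)|^2\,dx<\infty$ and $\int \omega^2|\widehat g(\omega)|^2\,d\omega<\infty$; equivalently, setting $Pg(x):=x\,g(x)$ and $Qg(x):=\tfrac{1}{2\pi i}g'(x)$ (distributional derivative, so that $\widehat{Qg}(\omega)=\omega\,\widehat g(\omega)$), both $Pg$ and $Qg$ lie in $L^2(\R)$, and $\|g\|_2=1$ by the ONB hypothesis.

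The core of the argument is to compute $\langle Pg,Qg\rangle$ in two ways. On the one hand, $P$ and $Q$ are (essentially) self-adjoint and satisfy the canonical commutation relation $QP-PQ=\tfrac{1}{2\pi i}I$, which formally gives
\[
  \langle Pg,Qg\rangle-\langle Qg,Pg\rangle \;=\; \langle (QP-PQ)g,g\rangle \;=\; \tfrac{1}{2\pi i}\,\|g\|_2^{2} \;=\; \tfrac{1}{2\pi i}.
\]
On the other hand, write $g_{m,n}(x):=e^{2\pi i n x}g(x-m)$ for the orthonormal basis elements and set
\[
  c_{m,n}:=\langle Pg,g_{m,n}\rangle, \qquad d_{m,n}:=\langle Qg,g_{m,n}\rangle,
\]
so that Parseval yields $\langle Pg,Qg\rangle=\sum_{(m,n)\in\Z\times\Z}c_{m,n}\,\overline{d_{m,n}}$. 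Substituting $y=x-m$ in the defining integral of $c_{m,n}$ and using $e^{-2\pi imn}=1$ for integer $m,n$, I would derive $c_{m,n}=\overline{c_{-m,-n}}+m\,\delta_{m,0}\delta_{n,0}$; a parallel integration by parts gives $d_{m,n}=\overline{d_{-m,-n}}+n\,\delta_{m,0}\delta_{n,0}$. Hence $c_{m,n}=\overline{c_{-m,-n}}$ and $d_{m,n}=\overline{d_{-m,-n}}$ for $(m,n)\ne(0,0)$, with $c_{0,0}$ and $d_{0,0}$ real. Re-indexing $(m,n)\mapsto(-m,-n)$ in the Parseval sum then shows $\langle Pg,Qg\rangle=\overline{\langle Pg,Qg\rangle}\in\R$, forcing $\langle Pg,Qg\rangle-\langle Qg,Pg\rangle=0$ and contradicting the commutator identity above.

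The step I expect to be the main obstacle is the rigorous justification of the commutator identity, since $P$ and $Q$ do not share an invariant dense domain and we only know $Pg,Qg\in L^2(\R)$ rather than $g\in\dom(PQ)\cap\dom(QP)$. The cleanest remedy is to transport the entire argument to the Zak transform $Zg\in L^2([0,1]^2)$: the hypotheses $Pg,Qg\in L^2(\R)$ become $Zg\in H^1$ of the fundamental domain (with quasi-periodic boundary conditions), the orthonormal-basis condition becomes $|Zg|=1$ almost everywhere, and the operators $P,Q$ translate into first-order differential operators acting on $Zg$. Both of the computations above then reduce to standard distributional integration-by-parts identities on a Sobolev function on the torus, after which the symmetry-versus-commutator clash runs exactly as sketched.
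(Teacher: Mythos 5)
The paper itself offers no proof of Theorem \ref{t:balian-low}: it is quoted from \cite{balian,low}, and within the paper's own framework it would follow as the case $p=q=2$ of Theorem \ref{t:gautam-ext} (finite uncertainty product forces $Zg\in\VMO_{\rm loc}(\R^2)$, orthonormality forces $|Zg|=1$ a.e., and the quasi-periodicity/degree obstruction for VMO maps gives the contradiction). Your route is genuinely different: it is Battle's commutator argument, and its skeleton is sound. The reduction to $\alpha=\beta=0$ is correct; the coefficient identities are rigorous as written, since all the integrals involved converge absolutely once $g, Pg, Qg\in L^2(\R)$ (note the terms $m\,\delta_{m,0}\delta_{n,0}$ and $n\,\delta_{m,0}\delta_{n,0}$ vanish identically, so the symmetry $c_{m,n}=\overline{c_{-m,-n}}$, $d_{m,n}=\overline{d_{-m,-n}}$ holds for all $(m,n)$); Parseval and re-indexing then indeed force $\langle Pg,Qg\rangle\in\R$. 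What your approach buys is an elementary, self-contained proof for orthonormal bases; what it gives up relative to the paper's framework is that the symmetry step leans on orthonormality, so it extends to Riesz bases only via the biorthogonal generator (as in \cite{bhw}), whereas the VMO route yields the Riesz-basis case and the critical exponents \eqref{eqn:uncertainty-product-pq} directly and is what the paper generalizes to subspaces.

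The one step you flag is indeed the crux and is not a formality: the identity $\langle Pg,Qg\rangle-\langle Qg,Pg\rangle=\tfrac{1}{2\pi i}\|g\|_2^2$ under the sole hypotheses $Pg,Qg\in L^2(\R)$ is a genuine lemma (one cannot pass through $\langle (QP-PQ)g,g\rangle$ since $g$ need not lie in $\dom(QP)\cap\dom(PQ)$). It is precisely the weak commutation relation proved in \cite{bhw} by mollification and truncation on the line, and citing or reproving that approximation lemma is the cleanest repair. Your Zak-transform patch can also be carried out --- in that picture $Z(Qg)\,\overline{Zg}=\tfrac{1}{2\pi i}\,\overline{Zg}\,\partial_x Zg$ and $Z(Pg)\,\overline{Zg}=x-\tfrac{1}{2\pi i}\,\overline{Zg}\,\partial_\omega Zg$ are a.e.\ real when $|Zg|=1$, which recovers your symmetry step, and the commutator identity becomes an integration by parts over $[0,1]^2$ --- but it is not merely ``standard'': you need the a.e.\ chain rule $\Re\!\left(\overline{Zg}\,\partial Zg\right)=\tfrac12\,\partial |Zg|^2$ for $H^1\cap L^\infty$ functions and trace-level integration by parts with the quasi-periodic boundary terms, which is roughly the same amount of work as the approximation lemma it replaces. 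With that lemma supplied (either way), your proof is complete and correct for the stated ONB version.
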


The result generalizes from $\Z \times \Z$ to separable lattices of the form $a \Z \times b \Z$, where $ab=1$; the latter being in fact necessary for $e^{2 \pi i bn x} g(x-a m)$, $(am,bn) \in a\Z\times b\Z$, to form an orthonormal basis of $L^2(\R)$. The results in this paper though achieve  generalizations in the case $a b >1$ by involving an additional invariance by time-frequency shifts. Before discussing the Balian Low theorem, its extensions, and our results in more depth, we state our main result in its simplest form  for illustration:

\begin{thm}\label{t:main-rational_density-simple}
If $ab\ge 1$ is rational, the functions $  e^{2 \pi i bn x} g(x-am)$,  $(am,bn) \in a\Z\times b\Z$, form an orthonormal system  and its closed linear span contains $e^{2 \pi i \eta x} g(x-u)$ for some $(u,\eta)\notin a\Z \times b\Z$, then
$$
\Big(\int |x-\alpha|^2 |g (x)|^2 \, dx \Big) \cdot \Big(\int |\omega-\beta|^2 |\widehat{g} (\omega)|^2 \, d\omega \Big) =\infty,\quad \alpha,\beta\in\R.
$$
\end{thm}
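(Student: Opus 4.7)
The plan is to leverage the extra time--frequency shift invariance to upgrade the Gabor Riesz sequence at sub-critical rational density $q/p$ (writing $ab=p/q$ in lowest terms) into a Gabor orthonormal basis at critical density~$1$, at which point the classical Balian--Low theorem (Theorem~\ref{t:balian-low}) applies directly.

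The first step is to verify that $V := \overline{\linspan}\{\pi(am,bn)g: m,n\in\Z\}$ is invariant under $\pi(u,\eta)$ and not merely that it contains the single vector $\pi(u,\eta)g$. The Heisenberg commutation relation
\[
  \pi(u,\eta)\,\pi(am,bn) \;=\; e^{2\pi i(\eta am-ubn)}\,\pi(am,bn)\,\pi(u,\eta)
\]
shows that $\pi(u,\eta)\,\pi(am,bn)g$ is a unimodular scalar times $\pi(am,bn)\bigl(\pi(u,\eta)g\bigr)$, which already lies in $V$ by hypothesis and the obvious lattice invariance of $V$. Taking closed linear spans yields $\pi(u,\eta)V\subseteq V$. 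Iterating this observation gives $\pi(ku+am,\,k\eta+bn)g\in V$ for all $k,m,n\in\Z$, so $V$ contains a Gabor family indexed by the abelian subgroup $\Lambda' := a\Z\times b\Z + \Z\cdot(u,\eta)$ of $\R^{2}$.

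The second step is to exploit that $\{\pi(am,bn)g\}$ is a Riesz sequence: the enlarged family $\{\pi(\lambda)g\}_{\lambda\in\Lambda'}\subset V$ inherits a Bessel bound, so $\Lambda'$ must be uniformly discrete. Consequently there exists a smallest $k_{0}\in\N$ with $(k_{0}u,k_{0}\eta)\in a\Z\times b\Z$, and $\Lambda'$ is a lattice of covolume $ab/k_{0}=p/(qk_{0})\le 1$. Together with the rationality of $ab$, this provides a finite-dimensional fiber structure via the Zak / Zibulski--Zeevi transform. In that picture, the additional shift $\pi(u,\eta)$ acts on each fiber by a matrix that intertwines with the $p\times q$ Zibulski--Zeevi symbol of $g$; the fact that $\pi(u,\eta)g$ is \emph{already} a linear combination of the existing Riesz basis vectors forces a fiberwise rank relation that, after repeating the bootstrap finitely many times if $k_{0}q<p$, produces a sublattice $\Lambda''\supseteq a\Z\times b\Z$ of covolume exactly~$1$ for which $\{\pi(\lambda)g\}_{\lambda\in\Lambda''}$ is an orthonormal basis of $L^{2}(\R)$.

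Having reached critical density, an affine change of variables reduces the resulting ONB to the standard one on $\Z\times\Z$, and Theorem~\ref{t:balian-low} yields the divergence of the uncertainty product for the rescaled generator, which is equivalent to the divergence for $g$ itself, since the two quantities in \eqref{eqn:uncertainty-product} are covariant under $L^2$-normalized dilations. The main obstacle I expect is the passage from $\pi(u,\eta)$-invariance of $V$ to the production of a genuine Gabor ONB at density~$1$: a priori the enlarged system on $\Lambda'$ need be neither orthonormal nor a basis of $L^{2}(\R)$, and converting the invariance into an actual increase of density relies essentially on the rationality assumption to reduce the analysis to a finite-dimensional linear-algebra problem on the Zak fibers. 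This is the heart of the argument, and presumably also where the VMO hypothesis listed among the paper's keywords enters, most likely through a BMO/VMO-style sharpening of the classical Balian--Low theorem applied at the final critical-density step.
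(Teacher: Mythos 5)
Your proposal breaks down at its central step, and not merely for lack of detail: the ``bootstrap'' to a Gabor orthonormal (or Riesz) basis over an enlarged lattice $\Lambda''\supsetneq a\Z\times b\Z$ of covolume $1$ is impossible in principle. The hypothesis is precisely that $\pi(u,\eta)g$ lies in the closed linear span $V$ of the original system, so every ``new'' vector $\pi(am+ku,bn+k\eta)g$ you adjoin is already in $V$; the enlarged family is therefore linearly dependent in the Riesz-sequence sense and can never be part of an orthonormal basis together with the old vectors, and its closed span is still $V$, which need not be $L^2(\R)$ at all. So no reduction to the classical Balian--Low theorem at critical density can be extracted this way. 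There are further unjustified steps earlier: membership of the enlarged family in $V$ does not give it a Bessel bound, and for irrational $(u,\eta)$ there need be no $k_0$ with $k_0(u,\eta)\in a\Z\times b\Z$, so $\Lambda'$ need not be a lattice (the paper deals with this separately, via Lemma \ref{l:rational}, by showing the invariance set is closed and always contains a rational point outside $\Lambda$).

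The paper's argument runs in the opposite direction: it assumes, for contradiction, that the uncertainty product is finite, deduces from Gautam's result that $Zg\in\VMO_{\rm loc}(\R^2)$, and then shows that this regularity is incompatible with the extra invariance. Concretely, after reducing to a separable lattice $\tfrac1Q\Z\times P\Z$ by metaplectic operators (which requires proving that the relevant function class is preserved under Fourier transform, rational dilations and chirps), the relation $\pi(u,\eta)g\in\calG(g,\Lambda)$ is rewritten through the Zak transform as a matrix identity $\bA(x-u,\omega-\eta)=e^{-2\pi i\eta x}D_P^{-1}\bA(x,\omega)\bM(x,\omega)$ for the Zibulski--Zeevi-type matrix $\bA$; iterating, taking determinants, and applying the $\VMO$ divisibility result (Proposition \ref{p:THEPROP}, a relaxed version of the continuous winding-number lemma of \cite{cmp}) forces $(u,\eta)\in\Lambda$, the desired contradiction. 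In particular, $\VMO$ does not enter as a sharpening of the classical Balian--Low theorem at a final critical-density step, as you speculate, but as the regularity class in which this topological obstruction can be run; your parallel intuition that rationality reduces matters to finite-dimensional fiberwise linear algebra is the one ingredient of your plan that matches the actual proof.
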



\vspace{.5cm}
In the last two decades, the Balian-Low theorem has inspired significant research in time-frequency analysis and has itself been generalized in various ways (see, e.g., \cite{asw,g,ht,no}). Gautam \cite{g} recognized that $g$ having a finite uncertainty product (\ref{eqn:uncertainty-product}) implies that its Zak transform $Zg$ has locally   vanishing mean oscillation   and that the latter actually prevents the system $\{e^{2\pi inx}g(x-m) : m,n\in\Z\}$ to be a Riesz basis of $L^2(\R)$. We will introduce the reader in Sections \ref{s:prelims} and \ref{s:VMO} to both the Zak transform and the concept of vanishing mean oscillation ($\VMO$). In fact, Gautam proved the following theorem.

\begin{thm}[\cite{g}]\label{t:gautam-ext}
If $g\in L^2(\R)$ such that the Gabor system $\{e^{2 \pi i n x}g(x-m) : m,n\in\Z\}$ is a Riesz basis of $L^2(\R)$, then $Zg\notin\VMO_{\rm loc}(\R^2)$. Moreover, if $Zg\notin\VMO_{\rm loc}(\R^2)$, then for any $p,q\in (1,\infty)$ with $\tfrac 1p + \tfrac 1q = 1$ we have
\begin{align}\label{eqn:uncertainty-product-pq}
\Big(\int |x-\alpha|^{q} |g (x)|^2 \, dx \Big) \cdot \Big(\int |\omega-\beta|^{p} |\widehat{g} (\omega)|^2 \, d\omega \Big) = \infty,\quad \alpha,\beta\in\R.
\end{align}
In particular, if $\{ e^{2 \pi i n x}g(x-m) : m,n\in\Z\}$ constitutes a Riesz basis of $L^2(\R)$, then \eqref{eqn:uncertainty-product-pq} holds for any $p$ and $q$ as above.
\end{thm}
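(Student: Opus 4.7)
The plan is to reduce both statements to properties of the Zak transform $Zg(x,\omega)=\sum_{k\in\Z}g(x-k)e^{2\pi ik\omega}$, which satisfies the quasi-periodicity $Zg(x+1,\omega)=e^{2\pi i\omega}Zg(x,\omega)$ and $Zg(x,\omega+1)=Zg(x,\omega)$, and which characterizes the Riesz basis property by $0<A\le |Zg(x,\omega)|\le B<\infty$ almost everywhere for some constants $A,B$.

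For the first claim I argue by contradiction: suppose the Gabor system is a Riesz basis and $Zg\in\VMO_{\rm loc}(\R^2)$. Since $Zg$ is nonvanishing and bounded, composition with the locally Lipschitz map $z\mapsto z/|z|$ on the annulus $\{A\le|z|\le B\}$ preserves VMO, so $Zg/|Zg|$ is an $S^1$-valued $\VMO_{\rm loc}$ function. But quasi-periodicity forces the phase to wind by $2\pi$ as $x$ traverses $[0,1]$ at generic fixed $\omega$, so this unimodular function cannot lift to a single-valued continuous (let alone VMO) phase on a fundamental domain. This topological obstruction, made rigorous through Brezis--Nirenberg-type degree theory for VMO maps into $S^1$, yields the contradiction.

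For the second claim I prove the contrapositive: finite weighted moments in \eqref{eqn:uncertainty-product-pq} imply $Zg\in\VMO_{\rm loc}(\R^2)$. The two basic $L^2$-moduli of continuity of $Zg$ on the fundamental cube $[0,1]^2$ are, by Parseval in the defining $\omega$-series, $\|Zg(\cdot+\delta,\cdot)-Zg\|_{L^2}^2=\|g(\cdot+\delta)-g\|_{L^2(\R)}^2=\int|e^{2\pi i\omega\delta}-1|^2|\widehat g(\omega)|^2\,d\omega$ (via Plancherel) and, up to integer-part errors, $\|Zg(\cdot,\cdot+\delta)-Zg\|_{L^2}^2\approx\int|e^{2\pi iy\delta}-1|^2|g(y)|^2\,dy$. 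The $p$-weighted moment of $\widehat g$ bounds the first at rate $\delta^{\min(p,2)}$, the $q$-weighted moment of $g$ bounds the second at rate $\delta^{\min(q,2)}$, and the conjugate identity $1/p+1/q=1$ is exactly what is needed, via Hölder applied to differences along the two coordinate directions, to merge these one-directional $L^2$-oscillation bounds into a genuine $L^1$ mean-oscillation estimate on two-dimensional cubes that vanishes with side length, placing $Zg$ in $\VMO_{\rm loc}$.

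The main obstacle I anticipate is the first part: the informal winding-number argument must be rigorously underpinned by degree theory for VMO maps, since mere VMO membership gives no pointwise regularity and the natural continuous-logarithm reasoning is unavailable. The second part is essentially quantitative, but care is needed to invoke the conjugate-exponent balance at precisely the right moment so that the bound genuinely goes to zero with the cube size for arbitrary $(p,q)\in(1,\infty)^2$ with $1/p+1/q=1$, rather than only in the symmetric $p=q=2$ regime of the classical Balian--Low theorem.
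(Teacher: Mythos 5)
Two remarks before the main point: this theorem is not proved in the paper at all --- it is quoted from Gautam \cite{g} and used as a black box --- so the relevant comparison is with Gautam's argument rather than with anything in the present text. Your first half does follow that route: Riesz basis $\Leftrightarrow$ $0<A\le|Zg|\le B$ a.e., pass to the unimodular function $Zg/|Zg|$ (Lipschitz composition on an annulus, legitimate for $\VMO\cap L^\infty$, in the spirit of Lemma \ref{l:closed}), and derive a topological contradiction from quasi-periodicity via Brezis--Nirenberg degree theory for $\VMO$ maps. One imprecision: the winding is not picked up ``as $x$ traverses $[0,1]$ at fixed $\omega$'' --- at fixed $\omega$ the relation $Zg(x+1,\omega)=e^{2\pi i\omega}Zg(x,\omega)$ only multiplies by a constant phase; the degree $\pm1$ appears when one goes around $\partial[0,1]^2$, the contributions of opposite edges cancelling except for the factor $e^{2\pi i\omega}$, $\omega\in[0,1]$. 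With that correction the plan for the first claim is sound, though making the $\VMO$-degree step precise is the substantive work.

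The second half has a genuine gap. Your directional $L^2$-moduli are correct, say $\omega_1(h):=\|Zg(\cdot+h,\cdot)-Zg\|_{L^2([0,1]^2)}=O\bigl(h^{\min(p,2)/2}\bigr)$ and $\omega_2(h)=O\bigl(h^{\min(q,2)/2}\bigr)$, but they are \emph{global} in the transverse variable, while mean oscillation lives on a single small cube $Q=I\times J$ of side $\delta$; they cannot be ``merged by H\"older'' into a bound that vanishes with $\delta$. Splitting an increment into a horizontal and a vertical move and applying Cauchy--Schwarz, the best one extracts directly is
\begin{equation*}
M_Q(Zg)\;\lesssim\;\frac1\delta\Bigl(\sup_{|h|\le\delta}\omega_1(h)+\sup_{|h|\le\delta}\omega_2(h)\Bigr)\;\lesssim\;\delta^{\frac{\min(p,2)}2-1}+\delta^{\frac{\min(q,2)}2-1},
\end{equation*}
and since $1/p+1/q=1$ forces $\min(p,q)\le 2$, this is $O(1)$ only in the symmetric case $p=q=2$ (a $\BMO$ bound, not $\VMO$) and diverges whenever $p\ne q$; moreover the two error terms \emph{add}, so there is no product structure to which H\"older with exponents $p,q$ could be applied at this level. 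The conjugate-exponent criticality has to enter through a different mechanism, and that mechanism is missing from your sketch: either a cube-localized (fractional, anisotropic) Poincar\'e-type estimate whose right-hand side involves integrals over $Q$ only, so that uniform smallness as $|Q|\to0$ follows from absolute continuity of the integral (this is how one proves $W^{1,2}(\R^2)\subset\VMO$; here the interplay of the two directional regularities is needed precisely to rule out concentration of the $x$-increment mass on a thin $\omega$-strip), or an approximation argument: a quantitative $\BMO$ estimate for $Zf$ in terms of the critical Sobolev norms of $f$ and $\hat f$, combined with density of functions whose Zak transform is continuous and the fact that $\VMO$ is closed in $\BMO$. Without one of these, the step ``one-directional $L^2$ bounds $\Rightarrow$ $o(1)$ oscillation on every small cube'' is exactly the step that fails, so the second claim is not established by the proposal as written.
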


In this paper, we generalize Theorem \ref{t:gautam-ext} in two ways. First, as the attentive reader might have noticed, Theorem \ref{t:gautam-ext} is only proved and formulated for the most simple lattice $\Z\times\Z$. In our results we consider general rational lattices and lattices of rational density. Secondly, we work with Gabor systems that constitute a Riesz basis of their closed linear span instead of $L^2(\R)$, as indicated in Theorem \ref{t:main-rational_density-simple}. Our first main result reads as follows.

\begin{thm}\label{t:main-rational_density}
Let $g\in L^2(\R)$ and let $\Lambda \subset \R^2$ be a lattice of rational density such that the Gabor system $\{e^{2\pi ibx}g(x-a) : (a,b)\in\Lambda\}$ is a Riesz basis of its closed linear span $\calG(g,\Lambda)$. If $e^{2\pi i\eta x}g(x-u)\in\calG(g,\Lambda)$ for some $(u,\eta)\notin\Lambda$, then \eqref{eqn:uncertainty-product-pq} holds for all $p,q\in (1,\infty)$ with $\tfrac 1 p+\tfrac 1 q = 1$.
\end{thm}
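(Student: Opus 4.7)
The second half of Theorem~\ref{t:gautam-ext} asserts, purely as a property of $g\in L^2(\R)$ and its Zak transform, that $Zg\notin\VMO_{\rm loc}(\R^2)$ implies \eqref{eqn:uncertainty-product-pq}, irrespective of the underlying lattice. The plan is therefore to show that the hypotheses of Theorem~\ref{t:main-rational_density} force $Zg\notin\VMO_{\rm loc}(\R^2)$, and then to quote Gautam's implication.

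\textbf{Reduction and shift-invariance upgrade.} By applying a suitable metaplectic operator $\mu(S)$, $S\in\SL(2,\R)$, that diagonalizes $\Lambda$, I may assume $\Lambda=\alpha\Z\times\beta\Z$ with $\alpha\beta=p/q\in\Q_{>0}$ in lowest terms; the metaplectic operator is unitary on $L^2(\R)$, transforms time-frequency shifts in a controlled way, and preserves both the Riesz basis property and the $\VMO_{\rm loc}$ property of the Zak transform (via an explicit affine change of variables on $\R^2$). Next, the single-vector assumption $\pi(u,\eta)g\in\calG(g,\Lambda)$ upgrades to the stronger statement that $\calG(g,\Lambda)$ is itself invariant under $\pi(u,\eta)$: the Weyl relation $\pi(u,\eta)\pi(\lambda)=c_\lambda\,\pi(\lambda)\pi(u,\eta)$ (with $|c_\lambda|=1$) together with the $\pi(\lambda)$-invariance of $\calG(g,\Lambda)$ places each $\pi(u,\eta)\pi(\lambda)g$ inside $\calG(g,\Lambda)$, and these vectors span the space.

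\textbf{Zak-side analysis and the $\VMO$ contradiction.} I pass to the Zibulski--Zeevi fiberized Zak transform $\mathbf{Z}g$ adapted to $\Lambda$. The Riesz basis property of $\{\pi(\lambda)g\}_{\lambda\in\Lambda}$ on its closed span is equivalent to $\|\mathbf{Z}g(\cdot)\|$ being essentially bounded above and bounded away from zero on its support $E\subset\R^2$. The $\pi(u,\eta)$-invariance of $\calG(g,\Lambda)$ translates into an identity of the schematic form
\[
\mathbf{Z}g(x-u,\xi-\eta)\;=\;e^{2\pi i(\eta x-\xi u)}\,M(x,\xi)\,\mathbf{Z}g(x,\xi)\qquad\text{a.e.\ on }E,
\]
where $M$ is a bounded $\Lambda^\circ$-periodic multiplier. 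Assume now, for contradiction, that $Zg\in\VMO_{\rm loc}(\R^2)$, so that every entry of $\mathbf{Z}g$ lies in $\VMO_{\rm loc}$. On a positive-measure piece of $E$ where one entry $(\mathbf{Z}g)_j$ is essentially bounded away from zero, the Brezis--Nirenberg/Sarason theory of $\VMO$ maps yields a local continuous logarithm, so the displayed identity, read modulo $2\pi\Z$, becomes an equality of arguments. The left-hand side has only integer winding along closed loops in $E$, whereas the right-hand side carries the linear phase $e^{2\pi i(\eta x-\xi u)}$ whose winding along a suitable $\Lambda^\circ$-periodic loop inside $E$ is non-integer precisely because $(u,\eta)\notin\Lambda$. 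The resulting contradiction forces $Zg\notin\VMO_{\rm loc}(\R^2)$, and \eqref{eqn:uncertainty-product-pq} then follows from Theorem~\ref{t:gautam-ext}.

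\textbf{Main obstacle.} The winding obstruction in the last step is the crux of the argument. In Gautam's original proof, the Riesz basis is of all of $L^2(\R)$, so $|Zg|$ is bounded away from zero on all of $\R^2$ and the winding is organized across a full fundamental domain with complete symmetry. In the present subspace setting $|Zg|$ vanishes on a set of positive measure, so one must (i) identify a positive-measure subset of the support $E$ on which a $\VMO$-continuous logarithm of some entry $(\mathbf{Z}g)_j$ exists, (ii) exhibit inside this subset a closed curve on which the linear phase $e^{2\pi i(\eta x-\xi u)}$ produces a non-integer winding, and (iii) verify that the periodic multiplier $M$ cannot absorb this winding. The hypothesis $(u,\eta)\notin\Lambda$ has to drive step~(ii), and the rational density of $\Lambda$ is what ensures the relevant curves close up modulo the adjoint lattice $\Lambda^\circ$. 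These topological and measure-theoretic details form the main technical challenge of the proof.
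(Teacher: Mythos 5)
Your overall plan --- reduce to a separable lattice by a metaplectic operator and then derive a contradiction from $Zg\in\VMO_{\rm loc}(\R^2)$ --- has the right shape only for \emph{rational} lattices, and it breaks precisely at the reduction step for the lattices this theorem is about. A lattice of rational density generally has irrational generators, so the matrix $S\in\SL(2,\R)$ that maps $\Lambda$ onto $\tfrac1Q\Z\times P\Z$ involves irrational dilations and chirps. Your claim that the metaplectic operator ``preserves the $\VMO_{\rm loc}$ property of the Zak transform via an explicit affine change of variables'' is only available for rational parameters: the Janssen-type formula expressing $Z(D_\alpha g)$ as a finite sum of affinely transformed copies of $Zg$ requires $\alpha\in\Q$, and whether $Zg\in\VMO_{\rm loc}(\R^2)$ is preserved under irrational dilations is exactly the open problem stated in the paper (Problem \ref{prob:open}); indeed, proving $Zg\notin\VMO_{\rm loc}(\R^2)$ for the \emph{original} $g$ would upgrade Theorem \ref{t:main-rational} to all rational-density lattices, which the authors explicitly could not do. The paper's proof circumvents this: it transports the \emph{finite uncertainty product} condition instead, i.e.\ it shows that the class $\H=\bigcup_p\H^p(\R)$ of functions with $g\in H^{p/2}(\R)$ and $\hat g\in H^{q/2}(\R)$ is invariant under all metaplectic operators (the nontrivial part being the fractional Sobolev estimate for chirp multiplication $C_1$), and only then invokes Gautam's inclusion $Z\H\subset\VMO_{\rm loc}(\R^2)$ for the \emph{transformed} generator $g_1=U_Bg$, before applying the separable-lattice result (Proposition \ref{p:separable}). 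Your proposal contains no substitute for this transport argument.

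Even granting the reduction, the core of your argument is not carried out. You do not handle the case where $(u,\eta)$ is irrational relative to the separable lattice (the paper needs Lemma \ref{l:rational} to replace it by a rational point, and rationality of $(u,\eta)$ is what makes the iteration, or your ``closing up'' of loops, terminate). And the decisive step --- your winding obstruction --- is only gestured at: ``a local continuous logarithm'' for a $\VMO$ function is not accurate as stated (VMO degree theory works through mollification, not through continuous logarithms), and you yourself list points (i)--(iii) as unresolved. The paper's route here is different and concrete: the Riesz-sequence bounds make the fiberized matrix $\bA(x,\omega)$ bounded above and below (Lemma \ref{l:riesz}), the invariance yields a factorization $\bA(x-u,\omega-\eta)=e^{-2\pi i\eta x}D_P^{-1}\bA(x,\omega)\bM(x,\omega)$ with a periodic $\VMO$ multiplier $\bM$, iterating and taking determinants produces $\prod_{n}H(x+nu,\omega+n\eta)=e^{2\pi i(M_1x+M_2\omega)}$, and Proposition \ref{p:THEPROP} (proved by averaging $H$ to continuous functions and reducing to the continuous-phase result of \cite{cmp}) forces $(u,\eta)\in\Lambda$, the desired contradiction. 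As it stands, your proposal has both a failing step (the VMO transfer under irrational metaplectic operators) and an admitted missing core.
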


The conclusion of Theorem \ref{t:main-rational_density} can be strengthened significantly if we restrict ourselves to rational lattices, i.e., lattices that only consist of rational points. Recall that, given a field $\mathbb F$, by $\GL(n,\mathbb F)$ one usually denotes the group of invertible matrices in $\mathbb F^{n\times n}$ and $\SL(n,\mathbb F)$ stands for the subgroup of $\GL(n,\mathbb F)$ consisting of the matrices with determinant $1$.

\begin{thm}\label{t:main-rational}
Let $g\in L^2(\R)$ and let $\Lambda= A\Z^2$ with $A\in\operatorname{GL}(2,\Q)$, such that the Gabor system $\{e^{2\pi ibx}g(x-a) : (a,b)\in\Lambda\}$ is a Riesz basis of its closed linear span $\calG(g,\Lambda)$. If $e^{2\pi i\eta x}g(x-u)\in \calG(g,\Lambda)$ for some $(u,\eta)\notin\Lambda$, then $Zg\notin\VMO_{\rm loc}(\R^2)$.
\end{thm}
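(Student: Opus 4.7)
I plan to adapt Gautam's argument for the lattice $\Z\times\Z$ (Theorem~\ref{t:gautam-ext}) to general rational lattices, using the additional-invariance condition in place of the $L^2(\R)$-Riesz basis hypothesis. Since $A\in\operatorname{GL}(2,\Q)$, Smith normal form applied to an integer multiple of $A$ gives $U\in\operatorname{SL}(2,\Z)\subset\operatorname{Sp}(2,\Z)$ with $UA\Z^2=\alpha\Z\times\beta\Z$ for some $\alpha,\beta\in\Q$. The associated metaplectic unitary $\mu(U)$ on $L^2(\R)$ intertwines time-frequency shifts up to a phase, transporting the Riesz-basis property and the additional-invariance condition to $\mu(U)g$, the lattice $\alpha\Z\times\beta\Z$, and the new shift $U(u,\eta)^\top\notin\alpha\Z\times\beta\Z$. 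Explicit computation on the generators $\bigl(\begin{smallmatrix}0&-1\\1&0\end{smallmatrix}\bigr)$ and $\bigl(\begin{smallmatrix}1&1\\0&1\end{smallmatrix}\bigr)$ of $\operatorname{SL}(2,\Z)$ shows that the Zak transform is affected only by an affine coordinate change and multiplication by a smooth unimodular factor, both of which preserve $\VMO_{\rm loc}$. A subsequent dilation (again preserving both properties) would allow me to further assume $\Lambda=\Z\times(p/q)\Z$ with coprime positive integers $p,q$.

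Next, I would encode the Gabor system via the Zibulski-Zeevi matrix $M_g(x,\omega)$ of size $q\times p$, whose entries are translates of $Zg$. Standard theory says that $\{\pi(\lambda)g:\lambda\in\Lambda\}$ is a Riesz sequence with closed span $\calG(g,\Lambda)$ if and only if the Gramian $M_g^*M_g$ is bounded above and below by positive multiples of the identity on a fundamental domain. The hypothesis $\pi(u,\eta)g\in\calG(g,\Lambda)$ then becomes a pointwise matrix-vector identity $v_{u,\eta}=M_g\,c$, where $v_{u,\eta}$ is the Zibulski-Zeevi vector of $\pi(u,\eta)g$ and $c$ is a bounded measurable coefficient vector, uniquely given by $c=(M_g^*M_g)^{-1}M_g^*v_{u,\eta}$.

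To conclude, I would argue by contradiction. Assuming $Zg\in\VMO_{\rm loc}$, each entry of $M_g$ and $v_{u,\eta}$ lies in $\VMO_{\rm loc}$, and since inversion of uniformly positive Hermitian matrices, matrix multiplication, and adjunction all preserve $\VMO_{\rm loc}$, so does each entry of $c$. The vector $c$ satisfies quasi-periodicity relations inherited from those of the Zak transform but twisted by a phase factor depending on $(u,\eta)$; because $(u,\eta)\notin\Lambda$, these twists produce a nontrivial character on the relevant quasi-periodicity group. Applying the Brezis-Nirenberg degree theory for $\VMO$ maps into $\C\setminus\{0\}$, the integer-valued winding indices of the nonvanishing components of $c$ would be forced to equal fractional quantities prescribed by the twist, a contradiction. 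Hence $Zg\notin\VMO_{\rm loc}(\R^2)$.

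The crux will be the topological step: verifying that the twisting phases generated by $(u,\eta)\notin\Lambda$ are genuinely incompatible with $c$ lying in $\VMO_{\rm loc}$, and isolating a concrete winding invariant whose required value is forbidden by integrality. A secondary technical hurdle is checking that each reduction in the first step truly preserves $\VMO_{\rm loc}$ when applied explicitly to the generators of $\operatorname{SL}(2,\Z)$, including the chirp-modulation contribution coming from the upper-triangular generator.
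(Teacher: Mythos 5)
Your first two stages (metaplectic reduction of $A\Z^2$ to a separable rational lattice, with the $\VMO_{\rm loc}$ property of the Zak transform tracked through Fourier transform, rational dilations and chirps, and then the Zibulski--Zeevi encoding with $c=(M_g^*M_g)^{-1}M_g^*v_{u,\eta}$, whose entries lie in $\VMOli(\R^2)$ by stability of $\VMO$ under products and inversion of uniformly positive Hermitian matrices) match the paper's strategy; the only slip there is cosmetic (for a rational dilation the Zak transform is a finite sum of affinely transformed copies of $Zg$ times unimodular phases, not a single one, but finite sums are harmless). The genuine gap is in your concluding topological step, which as stated would fail for two concrete reasons. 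First, the entries of $c$ (the paper's $F_\ell$) are merely bounded $\VMO$ functions with no lower bound on their modulus, so no winding index or Brezis--Nirenberg degree is defined for them; moreover they are genuinely periodic, not quasi-periodic with a $(u,\eta)$-dependent twist, so the ``nontrivial character on the quasi-periodicity group'' you invoke does not act on $c$ at all --- the phase twist lives in the intertwining identity relating the matrix $\bA(x-u,\omega-\eta)$ to $\bA(x,\omega)$, not in transformation rules for $c$. Second, $(u,\eta)$ is an arbitrary point of $\R^2\setminus\Lambda$; without first replacing it by a rational point (the paper's Lemma~\ref{l:rational}, using density of irrational orbits and closedness of the invariance set) there is no finite iteration or integrality obstruction available to exploit.

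What is needed to close the gap, and what the paper does, is: after rationalizing $(u,\eta)$, assemble the shifted coefficient vectors into a $Q\times Q$ matrix $\bM(x,\omega)$ so that $\bA(x-u,\omega-\eta)=e^{-2\pi i\eta x}D_P^{-1}\bA(x,\omega)\bM(x,\omega)$, iterate this $N$ times (rationality of $(u,\eta)$ plus quasi-periodicity of $\bA$ closes the loop), cancel $\bA$ using its left invertibility, and take determinants to produce a single scalar $H=\det\bM\in\VMOli(\R^2)$ satisfying $\prod_{n=0}^{N-1}H(x+nu,\omega+n\eta)=e^{2\pi i(QM_1x+QM_2\omega)}$ a.e.; this product identity is what forces $|H|$ to be essentially bounded below, so that a degree-type argument becomes legitimate. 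Even then the paper does not apply degree theory directly to the $\VMO$ function $H$, but mollifies, extracts an $N$-th root of the error factor, and reduces to the continuous-case integrality result of \cite{cmp} (Proposition~\ref{p:THEPROP}), obtaining divisibility constraints that contradict $(u,\eta)\notin\Lambda$. A Brezis--Nirenberg-type $\VMO$ degree could plausibly substitute for that last mollification step, but only once you have constructed a nonvanishing scalar with the right multiplicative functional equation; your proposal never produces such an object, so the crux of the proof is still missing.
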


Note that neither of the above two theorems implies the other, since rational lattices are of rational density but the condition $Zg \notin \VMO_{\rm loc}(\R^2)$ is stronger than (\ref{eqn:uncertainty-product-pq}) as seen in Theorem \ref{t:gautam-ext} (cf.\ Problem \ref{prob:open}).

In \cite{gh}, Gebardo and Han already generalized the Balian-Low theorem to Gabor frames for subspaces of $L^2(\R)$. One of their main results states that {\it if $ab > 1$, $g\in L^2(\R)$, and $\{e^{2\pi ibmx}g(x-an) : m,n\in\Z\}$ forms an overcomplete frame for its closed linear span, then \eqref{eqn:uncertainty-product} holds}. As it is mentioned in \cite{gh}, the word ``overcomplete'' cannot be dropped in the statement, as is revealed by choosing the Gaussian for $g$ for which $\{e^{2\pi ibmx}g(x-an) : m,n\in\Z\}$ always is a Riesz sequence. Our theorems therefore complement the result from \cite{gh} inasmuch as we replace the term ``overcomplete frame for its closed linear span'' by ``Riesz basis of its closed linear span which has an additional time-frequency shift invariance''. In particular, we obtain the following corollary.

\begin{cor}
Let $g(x) = e^{-x^2}$ and let $\Lambda\subset\R^2$ be a lattice of rational density $<1$. Then $e^{2\pi i\eta x}g(x-u)\notin\calG(g,\Lambda)$ for all $(u,\eta)\in\R^2\backslash\Lambda$.
\end{cor}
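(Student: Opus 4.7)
The plan is to deduce the corollary as a short application of Theorem~\ref{t:main-rational_density}, used contrapositively. For this I must verify two facts about the Gaussian generator on our lattice $\Lambda$: (i)~that the associated Gabor system is a Riesz basis of its closed linear span $\calG(g,\Lambda)$, and (ii)~that the Gaussian satisfies a finite uncertainty product of the form appearing in \eqref{eqn:uncertainty-product-pq}.

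For (i), I would invoke the classical result of Lyubarskii and Seip: for the Gaussian $g(x)=e^{-x^2}$ and any lattice $\Lambda=A\Z^2\subset\R^2$ with $|\det A|>1$, equivalently of density strictly less than $1$, the Gabor system $\{e^{2\pi ibx}g(x-a) : (a,b)\in\Lambda\}$ is a Riesz sequence in $L^2(\R)$. The theorem of Lyubarskii and Seip is usually stated for separable lattices $a\Z\times b\Z$, but it extends to arbitrary lattices $A\Z^2$ via the metaplectic covariance of the short-time Fourier transform with Gaussian window, which turns any such lattice into a separable one by a symplectic change of coordinates on phase space. In particular, under our hypothesis that $\Lambda$ has (rational) density $<1$, the Gabor system is automatically a Riesz basis of $\calG(g,\Lambda)$.

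For (ii), since $|g(x)|^2=e^{-2x^2}$ and $|\wh g(\omega)|^2=\pi\,e^{-2\pi^2\omega^2}$, polynomial weights of any degree are dominated by the Gaussian decay, so
\[
\Big(\int|x-\alpha|^q|g(x)|^2\,dx\Big)\cdot\Big(\int|\omega-\beta|^p|\wh g(\omega)|^2\,d\omega\Big)<\infty
\]
for every $\alpha,\beta\in\R$ and every pair $p,q\in(1,\infty)$ with $\tfrac1p+\tfrac1q=1$.

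Combining the two, suppose for contradiction that there exist $(u,\eta)\in\R^2\setminus\Lambda$ with $e^{2\pi i\eta x}g(x-u)\in\calG(g,\Lambda)$. Then all hypotheses of Theorem~\ref{t:main-rational_density} are met ($\Lambda$ of rational density, Riesz basis of $\calG(g,\Lambda)$ by (i), and the additional time-frequency shift invariance), so its conclusion \eqref{eqn:uncertainty-product-pq} forces the displayed product to equal $\infty$ for every conjugate pair $(p,q)$, contradicting (ii). Hence no such $(u,\eta)$ exists. The only mildly delicate point — and the one I would expect to be the main obstacle if one were to write out all details — is the reduction of the Lyubarskii--Seip Riesz sequence property from separable to general rational lattices $A\Z^2$; this is entirely standard in time-frequency analysis via metaplectic invariance, so the argument remains short.
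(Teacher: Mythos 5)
Your argument is correct and is exactly the derivation the paper intends: the corollary is stated as an immediate consequence of Theorem~\ref{t:main-rational_density} combined with the known fact (via Lyubarskii/Seip--Wallst\'en interpolation results, valid for general lattices of density $<1$) that the Gaussian generates a Riesz sequence, whose uncertainty products \eqref{eqn:uncertainty-product-pq} are trivially finite. No genuine gap; your metaplectic reduction to separable lattices is a standard (even dispensable) detail, since the Fock-space interpolation results already cover arbitrary discrete sets of upper Beurling density $<1$.
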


An important variant of the Balian-Low theorem is the so-called amalgam version, known as the Amalgam Balian-Low theorem, which replaces the condition  \eqref{eqn:uncertainty-product} by $g\notin\calS_0 (\R)$, where $\calS_0 (\R)$ denotes the Feichtinger algebra, given by
$$
\calS_0 (\R)
= \left\{ f \in L^2(\R) : \int f(x) \, e^{-(x-t)^2}\ e^{2\pi i  x\nu}\, dx
\; \in L^{1} (t,\nu) \right\}.
$$
Recently \cite{cmp}, the Amalgam Balian-Low theorem has been generalized to Gabor subspaces of $L^2(\R)$ in a similar fashion as Theorem \ref{t:main-rational_density-simple} generalizes the Balian-Low theorem. Specifically, the main theorem in \cite{cmp} reads as Theorem \ref{t:main-rational_density} with ``\eqref{eqn:uncertainty-product-pq} holds for ...'' replaced by ``$g\notin\calS_0(\R)$''. In fact, the question whether $g\notin\calS_0(\R)$ can be replaced by \eqref{eqn:uncertainty-product} was posed as an open problem in \cite{cmp}. Hence, Theorem \ref{t:main-rational_density} gives a positive answer to this question and goes beyond.

As is well known, the techniques used in proving the Balian-Low theorem are much more involved than those used in the proof of the Amalgam Balian-Low theorem. Therefore, and as we want to point out, the problem of replacing $g\notin\calS_0(\R)$ by \eqref{eqn:uncertainty-product} or \eqref{eqn:uncertainty-product-pq} is by far not a matter of a straight-forward procedure.

The Balian-Low theorem and its amalgam version are not equivalent. In fact, as pointed out in \cite{bhw}, none of these two classical theorems implies the other. Therefore, it seems desirable to find a space $V\subset L^2(\R)$ which contains both $\calS_0(\R)$ and the set of functions with a finite uncertainty product as in \eqref{eqn:uncertainty-product} such that the functions in $V$ fail to be generators of Gabor Riesz bases of $L^2(\R)$. In fact, Theorem \ref{t:gautam-ext} provides such a space, namely the space of functions whose Zak transform is locally $\VMO$. Hence, the following easy consequence of Theorem \ref{t:main-rational} is a unification of the two classical theorems for rational lattices of the critical density $1$.

\begin{thm}\label{t:unification}
Let $g\in L^2(\R)$ and let $\Lambda= A\Z^2$ with $A\in\operatorname{GL}(2,\Q)$, $\det A = 1$, such that the Gabor system $\{e^{2\pi iax}g(x-b) : (a,b)\in\Lambda\}$ is a Riesz basis of $L^2(\R)$. Then $Zg\notin\VMO_{\rm loc}(\R^2)$.
\end{thm}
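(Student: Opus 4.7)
The plan is to derive Theorem \ref{t:unification} as an essentially immediate consequence of Theorem \ref{t:main-rational}. The point is that when the Gabor system is a Riesz basis of all of $L^2(\R)$, rather than merely of its closed linear span, the ``additional time-frequency shift invariance'' hypothesis required by Theorem \ref{t:main-rational} is automatic.

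Concretely, since $\{e^{2\pi iax}g(x-b):(a,b)\in\Lambda\}$ is a Riesz basis of $L^2(\R)$, its closed linear span $\calG(g,\Lambda)$ is all of $L^2(\R)$. The lattice $\Lambda = A\Z^2$ is a countable, discrete, hence proper subset of $\R^2$ (the hypothesis $\det A = 1$ merely places it at the critical density, which is the only density compatible with being a Riesz basis of $L^2(\R)$ in the first place). So one can pick any $(u,\eta) \in \R^2 \setminus \Lambda$; then $e^{2\pi i\eta x}g(x-u) \in L^2(\R) = \calG(g,\Lambda)$, and the extra-shift hypothesis of Theorem \ref{t:main-rational} is satisfied.

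There is a small bookkeeping issue to address: the Gabor system in Theorem \ref{t:unification} is written with modulation parameter $a$ and translation parameter $b$, whereas Theorem \ref{t:main-rational} uses the opposite convention. This is harmless. Replacing $A$ by $SA$, where $S$ is the $2\times 2$ coordinate-swap permutation matrix, gives $SA \in \GL(2,\Q)$ with $|\det(SA)|=1$, converts the indexing to the one used in Theorem \ref{t:main-rational}, and carries $(u,\eta) \notin \Lambda$ to $(\eta,u)\notin SA\Z^2$; the conclusion $Zg \notin \VMO_{\rm loc}(\R^2)$ is unaffected. Applying Theorem \ref{t:main-rational} to the relabeled lattice then delivers the desired conclusion.

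There is essentially no analytical obstacle in this argument; the entire substantive content — that a Gabor Riesz sequence on a rational lattice together with one extra time-frequency shift of its generator forces $Zg \notin \VMO_{\rm loc}(\R^2)$ — is already packaged into Theorem \ref{t:main-rational}. The only genuine observation is that, once the Riesz basis exhausts $L^2(\R)$, every point of $\R^2 \setminus \Lambda$ automatically supplies such an extra shift, and this is precisely why the authors bill the statement as an \emph{easy consequence} that unifies the classical Balian-Low theorem with the subspace version.
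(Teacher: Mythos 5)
Your proposal is correct and matches the paper's intended derivation: the paper gives no separate proof of Theorem \ref{t:unification}, presenting it precisely as an easy consequence of Theorem \ref{t:main-rational}, since a Riesz basis of all of $L^2(\R)$ makes $\calG(g,\Lambda)=L^2(\R)$ and hence any $(u,\eta)\in\R^2\setminus\Lambda$ supplies the extra time-frequency shift. Your handling of the swapped modulation/translation convention via the coordinate-swap matrix is also fine, as Theorem \ref{t:main-rational} only requires $A\in\GL(2,\Q)$.
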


The paper is organized as follows. In Section \ref{s:prelims} we introduce the reader to the notions and notations used throughout the paper. Section \ref{s:VMO} introduces and discusses the functions that are locally of vanishing mean oscillation (VMO). We prove several statements on invariance properties of $\VMO_{\rm loc}(\R^n)$ which we make use of in the proofs of our main results Theorem \ref{t:main-rational_density} and Theorem \ref{t:main-rational} in Section \ref{s:proof}, but which also seem to be new and are interesting in their own right.

\section[\hspace*{1cm}Preliminaries]{Preliminaries}\label{s:prelims}
In this section, we collect basic notions and tools in time-frequency analysis that are necessary for formulating and proving our main results.
Recall that a lattice in $\R^2$ is a set of the form $A\Z^2$ with some $A\in\operatorname{GL}(2,\R)$ and its density is given by $|\det A|^{-1}$. We define the time-frequency shift operator by $(u,\eta)\in\R^2$ as
$$
\pi(u,\eta):L^2(\R) \rightarrow L^2(\R),
\quad
\pi(u,\eta)f (x) = e^{2\pi i\eta x}f(x-u) .
$$
Using this notation, the Gabor system generated by $g\in L^2(\R)$ and a lattice $\Lambda \subset \R^2$ is simply written as $(g,\Lambda) := \{ \pi(u,\eta) g  : (u,\eta) \in \Lambda \}$.
By $\calG(g,\Lambda)$ we denote its closed linear span in $L^2(\R)$, i.e., $\calG(g,\Lambda) = \ol{\linspan}\,\{ \pi(u,\eta) g  : (u,\eta) \in \Lambda \}$. For the convenience of the reader, we state some easily verifiable properties of the time-frequency shift operator in the following lemma.

\begin{lem}\label{l:pi}
The following statements hold.
\begin{enumerate}
\item[{\rm (a)}] For $a,b,c,d\in\R$, we have
$$
\pi(a,b)\pi(c,d) = e^{-2\pi iad}\pi(a+c,b+d) = e^{-2\pi i(ad-bc)}\pi(c,d)\pi(a,b).
$$
\item[{\rm (b)}] For fixed $f\in L^2(\R)$, the mapping $(a,b)\mapsto\pi(a,b)f$ is continuous from $\R^2$ to $L^2(\R)$.
\end{enumerate}
\end{lem}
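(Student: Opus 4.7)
The plan is to prove both assertions by elementary direct computation on $L^2(\R)$; no deep machinery is needed, and the only subtlety is separating the translation and modulation parts of $\pi(u,\eta)$ when proving continuity.

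For part (a), I would apply both operators, in each of the two orders, to an arbitrary $f\in L^2(\R)$ and a point $x\in\R$, and compare. Unwinding the definition gives
\[
[\pi(a,b)\pi(c,d)f](x) \;=\; e^{2\pi i b x}\,[\pi(c,d)f](x-a) \;=\; e^{2\pi i b x}\, e^{2\pi i d(x-a)}\, f(x-a-c),
\]
and pulling out the $x$-independent factor $e^{-2\pi i ad}$ leaves $e^{2\pi i(b+d)x} f(x-(a+c)) = [\pi(a+c,b+d)f](x)$, which is the first equality. Applying the same identity with the roles of $(a,b)$ and $(c,d)$ interchanged yields $\pi(c,d)\pi(a,b) = e^{-2\pi i bc}\,\pi(a+c,b+d)$, and comparing the two expressions for $\pi(a+c,b+d)$ produces the cocycle factor $e^{-2\pi i(ad-bc)}$ of the second equality.

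For part (b), I would first reduce to continuity at the origin. By (a), for any $(a,b), (h,k)\in\R^2$ one has $\pi(a+h,b+k) = e^{2\pi i ak}\,\pi(a,b)\,\pi(h,k)$, so
\[
\|\pi(a+h,b+k)f - \pi(a,b)f\|_2 \;\le\; |e^{2\pi i ak}-1|\,\|f\|_2 \,+\, \|\pi(h,k)f - f\|_2,
\]
using that $\pi(a,b)$ is an isometry (its inverse, up to a unimodular scalar, is $\pi(-a,-b)$ by (a)). It therefore suffices to prove $\pi(h,k)f\to f$ in $L^2(\R)$ as $(h,k)\to(0,0)$. I would split this into the translation step and the modulation step via $\pi(h,k)f - f = (\pi(h,k)f - f(\cdot-h)) + (f(\cdot-h) - f)$. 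For the translation convergence $f(\cdot-h)\to f$, I approximate $f$ in $L^2$-norm by some $\phi\in C_c(\R)$ and use a standard $3\varepsilon$ argument together with the uniform continuity of $\phi$. For the modulation convergence $e^{2\pi i k\cdot}f\to f$, I note that $|e^{2\pi i kx}-1|^2|f(x)|^2\to 0$ pointwise and is dominated by $4|f(x)|^2\in L^1(\R)$, so dominated convergence finishes the job.

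I do not anticipate a real obstacle; both parts are classical. The only thing to watch is that in the reduction for (b) one should use strong continuity of $\pi(h,k)$ at the fixed vector $f$, not operator-norm continuity of the map $(a,b)\mapsto\pi(a,b)$ (which fails), and that the scalar prefactor $e^{2\pi i ak}$ is absorbed harmlessly because $\|f\|_2$ is finite.
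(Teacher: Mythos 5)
Your proof is correct; the paper itself gives no argument for this lemma (it is stated as ``easily verifiable''), and your computation for (a) together with the reduction to strong continuity at the origin, the $C_c(\R)$-density argument for translations, and dominated convergence for modulations is exactly the standard way to verify it. The one point you flag --- that only strong (not norm) continuity of $(a,b)\mapsto\pi(a,b)$ holds --- is indeed the only subtlety, and you handle it correctly.
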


The \emph{Fourier transform} is defined on $L^{1}(\R) \cap L^{2}(\R)$ by
\begin{align*}
\calF f(\omega) = \widehat{f}(\omega) := \int_{\R} f(x) \, e^{- 2 \pi i x \omega}\,dx,\quad \omega\in\R.
\end{align*}
It is well known that the operator $\mathcal{F}$ extends to a unitary operator from $L^{2}(\R)$ onto $L^{2}(\R)$. It can be used to define the Sobolev space $H^s(\R)$, $s>0$, as follows:
$$
H^s (\R) = \left\{ f \in L^2(\R) : \int_{\R}(1+ |\omega|^{2})^s|\widehat{f}(\omega)|^2\,d\omega < \infty\right\}.
$$
The \emph{Zak transform} of $f\in L^1(\R)\cap L^2(\R)$ is defined (a.e.) by
$$
Zf(x,\omega) = \sum_{k\in\Z}f(x+k)\,e^{-2\pi ik\omega},\quad (x,\omega)\in\R^2.
$$
As is easily seen, the function $Zf$ is quasi-periodic, i.e., for $m,n\in\Z$ we have
\begin{align}\label{e:extension}
Z f ( x + m , \omega + n ) = e^{2 \pi i m \omega} \, Z f (x, \omega)
\quad \text{for a.e. }(x,\omega)\in\R^2.
\end{align}
The mapping $f\mapsto Zf|_{[0,1]^2}$ extends continuously to a unitary map from $L^2(\R)$ onto $L^2([0,1]^2)$. Here, if $f\in L^2(\R)$, by $Zf$ we mean the quasi-periodic extension of $Zf|_{[0,1]^2}$ to $\R^2$, which is an element of $L^2_{\rm loc}(\R^2)$. We summarize some useful properties of the Zak transform in the following lemma.

\begin{lem}\label{l:zak}
Let $f\in L^2(\R)$. Then the following relations hold for a.e.\ $(x,\omega)\in\R^2$.
\begin{enumerate}
\item[{\rm (a)}] $Zf(x+m,\omega+n) = e^{2\pi im\omega} \, Zf(x,\omega)$ for all $m,n\in\Z$.
\item[{\rm (b)}] $Z\pi(u,\eta)f (x,\omega) = e^{2\pi i\eta x} \, Zf(x-u,\omega-\eta)$ for all $(u,\eta) \in \R^2$.
\item[{\rm (c)}] $(Z\pi(m,n)f)(x,\omega) = e^{2\pi i(nx-m\omega)} \, Zf(x,\omega)$ for all $m,n\in\Z$.
\item[{\rm (d)}] $Z\widehat{f} (x,\omega) = e^{2\pi i x \omega} \, Zf (-\omega,x)$.
\item[{\rm (e)}] $f(x) = \int_0^1 Zf(x,\omega)\,d\omega$.
\end{enumerate}
\end{lem}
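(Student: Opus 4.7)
The plan is to verify each identity first on the dense subspace $L^1(\R)\cap L^2(\R)$ (and on the Schwartz class $\mathcal{S}(\R)$ for part (d)), where the defining series
$$Zf(x,\omega)=\sum_{k\in\Z}f(x+k)\,e^{-2\pi ik\omega}$$
converges absolutely and all reorderings are legitimate; then extend to general $f\in L^2(\R)$ by continuity, using the unitarity of $Z\colon L^2(\R)\to L^2([0,1]^2)$ already noted just before the lemma.

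For (a), the substitution $k'=k+m$ in $Zf(x+m,\omega+n)=\sum_k f(x+m+k)\,e^{-2\pi ik(\omega+n)}$ pulls out the factor $e^{2\pi im(\omega+n)}=e^{2\pi im\omega}$, and the remaining factor $e^{-2\pi ik'n}\equiv 1$ reconstitutes $Zf(x,\omega)$. For (b), a direct computation gives
$$Z\pi(u,\eta)f(x,\omega)=\sum_k e^{2\pi i\eta(x+k)}f(x-u+k)\,e^{-2\pi ik\omega}=e^{2\pi i\eta x}\sum_k f((x-u)+k)\,e^{-2\pi ik(\omega-\eta)},$$
which is exactly $e^{2\pi i\eta x}Zf(x-u,\omega-\eta)$. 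Part (c) is the special case $(u,\eta)=(m,n)\in\Z^2$ of (b): applying (a) to remove the integer shift $(m,n)$ from the arguments of $Zf$ produces the extra phase $e^{-2\pi im\omega}$ (here $e^{2\pi imn}=1$), combining with $e^{2\pi inx}$ to give $e^{2\pi i(nx-m\omega)}$.

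For (d), I would take $f\in\mathcal{S}(\R)$, insert the Fourier integral for $\hat f(x+k)$ into the series for $Z\hat f(x,\omega)$, and interchange sum and integral. The resulting inner sum $\sum_{k}e^{-2\pi ik(y+\omega)}$ is Poisson's comb $\sum_{j}\delta(y+\omega-j)$, which collapses the integral to $\sum_j f(j-\omega)\,e^{-2\pi i(j-\omega)x}=e^{2\pi ix\omega}Zf(-\omega,x)$. Part (e) amounts to reading off the zeroth Fourier coefficient of $\omega\mapsto Zf(x,\omega)$: on $L^1\cap L^2$ one integrates the absolutely convergent series term-by-term, and $\int_0^1 e^{-2\pi ik\omega}\,d\omega=\delta_{k,0}$ leaves only $f(x)$.

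The main obstacle is the extension to general $f\in L^2(\R)$. Identities (a)--(d) are equalities in $L^2_{\rm loc}(\R^2)$ and pass to the closure via the unitarity of $Z$ together with the strong continuity of $\pi(u,\eta)$ on $L^2(\R)$ (Lemma \ref{l:pi}(b)) and of $\calF$. Identity (e) is more delicate because it is a pointwise a.e.\ statement; here one views both sides as bounded maps $L^2(\R)\to L^2_{\rm loc}(\R)$ (using Fubini on $[0,1]^2$ and the unitarity of $Z$), so density of $L^1(\R)\cap L^2(\R)$ forces the two sides to agree almost everywhere.
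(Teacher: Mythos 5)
Your proposal is correct, and in fact the paper offers no proof of this lemma at all: it is stated as a summary of standard properties of the Zak transform (available, e.g., in Gr\"ochenig's book or Janssen's paper cited as \cite{gr}, \cite{j}), so there is nothing to compare against. Your route --- verifying (a)--(e) by direct manipulation of the absolutely convergent series on $L^1(\R)\cap L^2(\R)$ (Poisson summation on $\mathcal S(\R)$ for (d)) and then extending by density via the unitarity of $Z$ and the strong continuity of $\pi(u,\eta)$ and $\calF$ --- is exactly the standard argument; note only that for general $f\in L^2(\R)$ the paper \emph{defines} $Zf$ as the quasi-periodic extension of $Zf|_{[0,1]^2}$, so (a) holds by construction, and your treatment of (e) as an a.e.\ identity between two maps bounded into $L^2_{\rm loc}(\R)$ is the right way to handle the pointwise statement.
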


The following technical lemma will be used to prove our main results. A similar statement can be found in \cite[Proposition 3.8]{gh} (see also \cite[Lemma 5]{cmp}). However, since the present setting is slightly different as in \cite{gh} and \cite{cmp}, we give a full proof of the statement.

\begin{lem}\label{l:riesz}
Let $P,Q\in\N$, $g\in L^2(\R)$, and assume that the Gabor system $(g,\tfrac 1 Q\Z\times P\Z)$ is a Riesz sequence in $L^2(\R)$ with Riesz bounds $A$ and $B$. Then the matrix function
$$
\bA(x,\omega) := \left( Zg(x-\tfrac k P - \tfrac \ell Q,\omega)\right)_{k,\ell=0}^{P-1,Q-1}\,\in\,\C^{P\times Q},\qquad (x,\omega)\in\R^2.
$$
is essentially bounded from above and from below. More precisely, for a.e.\ $(x,\omega)\in\R^2$ we have that
\begin{equation}\label{e:bounded}
PA\|\xi\|^2\,\le\,\|\bA(x,\omega)\xi\|^2\,\le\,PB\|\xi\|^2\qquad\text{for all $\xi\in\C^Q$}.
\end{equation}
In particular, $Zg\in L^\infty(\R^2)$, which also implies $g\in L^\infty(\R)$.
\end{lem}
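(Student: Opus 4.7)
The strategy is to transport the Riesz-sequence inequalities through the Zak transform into a pointwise matrix estimate, from which the bounds on $Zg$ and $g$ follow. Given a finitely supported sequence $c=(c_{m,n})\in\ell^2(\Z^2)$, set $f=\sum_{m,n}c_{m,n}\pi(m/Q,nP)g$ and compute $Zf$ from Lemma~\ref{l:zak}(b). Since $nP$ is an integer the $\omega$-shift vanishes by Lemma~\ref{l:zak}(a), and writing $m=kQ+\ell$ with $\ell\in\{0,\ldots,Q-1\}$ and $k\in\Z$, the same quasi-periodicity extracts a factor $e^{-2\pi ik\omega}$ from the integer $x$-shift, yielding
$$
Zf(x,\omega) \;=\; \sum_{\ell=0}^{Q-1} Zg\bigl(x-\tfrac{\ell}{Q},\omega\bigr)\,D_\ell(x,\omega), \quad D_\ell(x,\omega):=\sum_{k,n\in\Z}c_{kQ+\ell,n}\,e^{2\pi inPx}e^{-2\pi ik\omega}.
$$
Each $D_\ell$ is $\tfrac{1}{P}$-periodic in $x$ and $1$-periodic in $\omega$, which is precisely the feature that will bring the matrix $\bA$ into play.

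Next I slice the fundamental domain $[0,1]^2$ of the Zak transform along $x$ into $P$ strips of width $1/P$, parameterizing $x=s+j/P$ with $s\in[0,1/P]$ and $j\in\{0,\ldots,P-1\}$. A direct calculation (in which quasi-periodicity of $Zg$ permutes rows and multiplies the wrapped rows by $e^{2\pi i\omega}$, an operation unitary on $\C^P$) shows $\sum_{j=0}^{P-1}|Zf(s+j/P,\omega)|^2=\|\bA(s,\omega)\mathbf D(s,\omega)\|_{\C^P}^2$, with $\mathbf D:=(D_0,\ldots,D_{Q-1})^T$. Summing the Zak--Plancherel identity over the strips thus yields
$$
\|f\|_{L^2(\R)}^2 \;=\; \int_0^1\!\!\int_0^{1/P} \|\bA(s,\omega)\mathbf D(s,\omega)\|_{\C^P}^2 \, ds\, d\omega,
$$
while Parseval in the orthonormal basis $\{\sqrt P\,e^{2\pi inPs}e^{-2\pi ik\omega}\}_{k,n\in\Z}$ of $L^2([0,1/P]\times[0,1])$ gives $\|c\|_{\ell^2}^2=P\,\|\mathbf D\|_{L^2([0,1/P]\times[0,1];\,\C^Q)}^2$. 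Plugging both into $A\|c\|^2\le\|f\|^2\le B\|c\|^2$ yields, for every such $\mathbf D$,
$$
AP\int\|\mathbf D\|^2 \;\le\; \int\|\bA\,\mathbf D\|^2 \;\le\; BP\int\|\mathbf D\|^2.
$$

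The finitely supported sequences $c$ generate a dense subset of $L^2([0,1/P]\times[0,1];\C^Q)$ via $\mathbf D$, so testing against $\mathbf D=\xi\,\one_E$ for measurable $E$ and $\xi$ from a countable dense subset of $\C^Q$ upgrades the integral estimate to the pointwise bound \eqref{e:bounded} on $[0,1/P]\times[0,1]$. The main technical step is then the propagation of this bound to a.e.\ $(x,\omega)\in\R^2$: using the quasi-periodicity of $Zg$ one checks that $\bA(x+1/P,\omega)$ is obtained from $\bA(x,\omega)$ by a cyclic row-shift with the wrapped row rescaled by $e^{2\pi i\omega}$, while $\bA(x,\omega+1)=\bA(x,\omega)$. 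Both transformations are unitary on $\C^P$, so $\|\bA(\cdot,\cdot)\xi\|$ is invariant under these shifts and \eqref{e:bounded} extends to the whole plane. Finally, the upper bound applied to the standard basis vector $e_0\in\C^Q$ gives $|Zg(x,\omega)|^2\le\|\bA(x,\omega)e_0\|^2\le PB$ a.e., whence $Zg\in L^\infty(\R^2)$; the synthesis formula of Lemma~\ref{l:zak}(e) then yields $\|g\|_\infty\le\|Zg\|_\infty<\infty$.
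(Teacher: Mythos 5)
Your proof is correct and takes essentially the same route as the paper: transport the Riesz-sequence inequality through the Zak transform to the identity $\|f\|^2=\int\|\bA\,\mathbf D\|^2$ over a fundamental strip, localize to the pointwise bound \eqref{e:bounded}, extend by quasi-periodicity of $\bA$, and conclude $|Zg|^2\le PB$ from the column $e_0$ and $g\in L^\infty$ from Lemma \ref{l:zak}(e). The only minor difference is the localization step: you start from finitely supported coefficients and pass by density to $\mathbf D=\xi\one_E$ (where extending the \emph{lower} bound tacitly uses the already-established upper bound, via Fatou and the resulting $L^2$-boundedness of multiplication by $\bA$), whereas the paper avoids this by taking arbitrary $F\in L^\infty$ with $\ell^2$ Fourier coefficients directly and localizing at Lebesgue points with normalized balls.
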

\begin{proof}
Let $F\in L^\infty(R_P,\C^Q)$ be arbitrary, where $R_P := (0,\tfrac 1 P)\times (0,1)$. Then there exists $(c_{m,n})_{m,n\in\Z}\in\ell^2(\Z^2)$ such that $F_\ell = \sum_{s,n\in\Z}c_{sQ+\ell,n}e^{2\pi i(nPx-s\omega)}$, $\ell = 0,\ldots,Q-1$, where $F_\ell$ denotes the $\ell$-th coordinate of $F$. Using the properties of the Zak transform, we have (extending $F$ to $\R^2$ periodically)
\begin{align*}
\Bigg\|\sum_{m,n\in\Z}c_{m,n}\pi\left(\tfrac m Q,nP\right)g\Bigg\|_{L^2(\R)}^2
&= \Bigg\|\sum_{\ell=0}^{Q-1}\sum_{s,n\in\Z}c_{sQ+\ell,n}e^{2\pi i(nPx-s\omega)}(Zg)\left(x-\tfrac\ell Q,\omega\right)\Bigg\|_{L^2([0,1]^2)}^2\\
&= \left\|\sum_{\ell=0}^{Q-1}F_\ell(x,\omega)(Zg)\left(x-\tfrac\ell Q,\omega\right)\right\|_{L^2([0,1]^2)}^2\\
&= \int_0^{\tfrac 1 P}\int_0^1\sum_{k=0}^{P-1}\left|\sum_{\ell=0}^{Q-1}(Zg)\left(x-\tfrac k P-\tfrac\ell Q,\omega\right)F_\ell(x,\omega)\right|^2\!d\omega\,dx\\
&= \int_0^{\tfrac 1 P}\int_0^1\left\|\bA(x,\omega)F(x,\omega)\right\|_2^2\,d\omega\,dx.
\end{align*}
Hence, for every $F\in L^\infty(R_P,\C^Q)$ we obtain
$$
PA\|F\|_{L^2(R_P,\C^Q)}^2\,\le\,\int_0^{\tfrac 1 P}\int_0^1\left\|\bA(x,\omega)F(x,\omega)\right\|_2^2\,d\omega\,dx\,\le\,PB\|F\|_{L^2(R_P,\C^Q)}^2.
$$
Let $\calD$ be a countable dense set in $\C^Q$ (e.g., $\calD = (\Q+i\Q)^Q$). For $\xi\in\calD$ let $\calL(\xi)$ denote the set consisting of all Lebesgue points in $R_P$ of the map $(x,\omega)\mapsto\|\bA(x,\omega)\xi\|_2^2$ and put $\calL := \bigcap_{\xi\in\calD}\calL(\xi)$. Then $R_P\setminus\calL$ has zero measure. For $(x_0,\omega_0)\in\calL$, $\xi\in\calD$, and $\veps > 0$ define $F = F_{\xi,\veps,x_0,\omega_0} := \tfrac 1{\sqrt\pi\veps}\chi_{B_\veps(x_0,\omega_0)}\xi$, where $B_\veps(x_0,\omega_0)$ denotes the euclidian ball with center $(x_0,\omega_0)$ and radius $\veps$. Then, for $\veps$ small enough, $\|F\|_{L^2(R_P,\C^Q)} = \|\xi\|$ and
$$
\int_0^{\tfrac 1 P}\int_0^1\left\|\bA(x,\omega)F(x,\omega)\right\|_2^2\,d\omega\,dx = \tfrac 1{\pi\veps^2}\int_{B_\veps(x_0,\omega_0)}\left\|\bA(x,\omega)\xi\right\|_2^2\,d(x,\omega).
$$
Letting $\veps\to 0$ yields
$$
PA\|\xi\|^2\,\le\,\left\|\bA(x_0,\omega_0)\xi\right\|_2^2\,\le\,PB\|\xi\|^2.
$$
By a density argument this holds for all $\xi\in\C^Q$, which establishes \eqref{e:bounded} for all $(x,\omega)\in\calL$ and thus (due to the quasi-periodicity of $\bA$) for a.e.\ $(x,\omega)\in\R^2$. Now, choosing the first standard basis vector of $\C^Q$ for $\xi$, we obtain that $|Zg(x,\omega)|^2\le PB$ for a.e.\ $(x,\omega)\in\R^2$. Hence, $Zg\in L^\infty(\R^2)$. Also, Lemma \ref{l:zak}(e) yields $g\in L^\infty(\R)$.
\end{proof}

\section[\hspace*{1cm}Functions of Vanishing Mean Oscillation (VMO)]{Functions of Vanishing Mean Oscillation (VMO)}\label{s:VMO}
A cube in $\R^n$ of side length $\delta > 0$ is a set of the form $I_1\times\dots\times I_n$ where each $I_i \subset \R$ is a closed interval of length $\delta$. For a function $F\in L^1_{\rm loc}(\R^n)$ and a bounded measurable set $\Delta\subset\R^n$ with $|\Delta|>0$, we define
$$
F_\Delta := \frac 1{|\Delta|}\int_\Delta F\,dx
\qquad\text{and}\qquad
M_\Delta(F) := (|F - F_\Delta|)_\Delta.
$$
Also, for a bounded open set $U\subset\R^n$ and $\veps > 0$, let
$$
S_{\veps,U}(F) := \sup\left\{M_Q(F) : Q\subset U\text{ cube},\;|Q| < \veps\right\}.
$$

\begin{defn}
Let $U$ be a bounded open subset of $\R^n$.

\smallskip\noindent
(a) A function $F\in L^1_{\rm loc}(\R^n)$ is said to be of {\em bounded mean oscillation} \braces{$\BMO$} on $U$ if $\sup_Q M_Q(F) < \infty$, where the supremum is taken over all bounded cubes $Q$ contained in $U$.
The space of all such functions is denoted by $\BMO(U)$. We write $F\in\BMO_{\rm loc}(\R^n)$ if $F\in\BMO(U)$ for every bounded open set $U\subset\R^n$.

\smallskip\noindent
(b) A function $F\in L^1_{\rm loc}(\R^n)$ is said to be of {\em vanishing mean oscillation} \braces{$\VMO$} on $U$ if $F\in\BMO(U)$ and $\lim_{\veps\to 0}S_{\veps,U}(F) = 0$. The space of all such functions is denoted by $\VMO(U)$. Likewise, we write $F\in\VMO_{\rm loc}(\R^n)$ if $F\in\VMO(U)$ for every bounded open set $U\subset\R^n$.
\end{defn}

\begin{rem}
(a) It is easily seen that for any $F, G \in L^1_{\rm loc}(\R^n)$ and a cube $Q \subset U$, $(F+G)_Q = F_Q+G_Q$ and $M_Q(F+G)\leq M_Q(F)+M_Q(G)$, which leads to $S_{\veps,U}(F+G)\le S_{\veps,U}(F)+S_{\veps,U}(G)$. This shows that the sets $\BMO(U)$ and $\VMO(U)$ are linear spaces. Also, $\|\cdot\|_{\BMO(U)} := \sup_{Q\subset U}M_Q(F)$ induces a semi-norm on $\BMO(U)$.

\smallskip\noindent
(b) It is straightforward that $L^\infty(\R^n)\subset\BMO(\R^n)$. Also, every bounded uniformly continuous function on $\R^n$ belongs to $\VMO(\R^n)$ \cite{s}.
\end{rem}

In the sequel, we will use the notation
$$
\VMO_{\rm loc}^\infty(\R^n) := \VMO_{\rm loc}(\R^n)\cap L^\infty(\R^n).
$$
The following lemma shows in particular that $\VMO_{\rm loc}^\infty(\R^n)$ is closed under multiplication and is therefore an algebra.

\begin{lem}\label{l:closed}
The following statements hold.
\begin{itemize}
\item[{\rm (i)}]
If $F,G \in L^1_{\rm loc}(\R^n)$, then for any cube $Q\subset\R^n$ we have
\begin{equation}\label{e:FQs}
|F_QG_Q - (FG)_Q|\le \tfrac{1}{2}\max\{\|F\|_\infty,\|G\|_\infty\}(M_Q(F) + M_Q(G)).
\end{equation}
Also, whenever $U\subset\R^n$ is a bounded open set and $\veps > 0$, then
\begin{equation}\label{e:Se}
S_{\veps,U}(FG)\,\le\, \tfrac{3}{2} \max\{\|F\|_\infty,\|G\|_\infty\}\big(S_{\veps,U}(F)+S_{\veps,U}(G)\big).
\end{equation}
Consequently, $F,G\in\VMOli(\R^n)$ implies $FG\in\VMOli(\R^n)$.

\item[{\rm (ii)}]
If $F\in\VMO_{\rm loc}(\R^n)$ and $\essinf |F| > 0$, then for any bounded open set $U\subset\R^n$ there exists $\veps = \veps_U > 0$ such that
\begin{align}\label{e:FQ_greater}
|F_Q|\geq (\essinf |F|)/2
\quad \text{for all cubes} \;\; Q\subset U \;\; \text{with} \;\; |Q|<\veps ,
\end{align}
and
\begin{equation}\label{e:Se2}
S_{\veps,U}(1/F)\,\le\,\frac 4{(\essinf|F|)^2} \, S_{\veps,U}(F).
\end{equation}
Consequently, $F\in\VMO_{\rm loc}(\R^n)$ and $\essinf |F| > 0$ imply $1/F\in\VMOli(\R^n)$.
\end{itemize}
\end{lem}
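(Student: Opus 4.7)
For part (i), my plan is to exploit the symmetric algebraic identity
$$
FG - F_Q G_Q \;=\; \tfrac{1}{2}\bigl[(F+F_Q)(G-G_Q) + (G+G_Q)(F-F_Q)\bigr].
$$
Averaging over $Q$, the two terms $F_Q(G-G_Q)$ and $G_Q(F-F_Q)$ integrate to zero by definition of $F_Q,G_Q$, leaving
$$
(FG)_Q - F_Q G_Q \;=\; \tfrac{1}{2|Q|}\int_Q\bigl[F(G-G_Q) + G(F-F_Q)\bigr]\,dx.
$$
Bounding $|F|\le\|F\|_\infty$ and $|G|\le\|G\|_\infty$ in the respective summands yields (\ref{e:FQs}). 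For (\ref{e:Se}) I would split
$FG - (FG)_Q = \bigl(FG - F_Q G_Q\bigr) - \bigl((FG)_Q - F_Q G_Q\bigr)$, estimate the first summand pointwise by
$\|F\|_\infty|G-G_Q| + \|G\|_\infty|F-F_Q|$ (using $|G_Q|\le\|G\|_\infty$), average, and apply (\ref{e:FQs}) to the constant correction. This produces the multipliers $1$ and $\tfrac{1}{2}$, summing to $\tfrac{3}{2}$. Closure of $\VMOli$ under products then follows immediately: $FG\in L^\infty$ is clear, $\BMO$-boundedness is trivial via $M_Q(FG)\le 2\|FG\|_\infty$, and (\ref{e:Se}) forces $S_{\veps,U}(FG)\to 0$.

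For part (ii), set $c:=\essinf|F|>0$. To prove (\ref{e:FQ_greater}) I would argue by contradiction: suppose $|F_Q|<c/2$ for some cube $Q\subset U$. Since $|F|\ge c>|F_Q|$ almost everywhere, the reverse triangle inequality gives
$|F - F_Q| \ge |F| - |F_Q| \ge c/2$ a.e.\ on $Q$, hence $M_Q(F)\ge c/2$. Using $F\in\VMO_{\rm loc}(\R^n)$, I choose $\veps_U>0$ so that $S_{\veps_U,U}(F)<c/2$, which rules out this scenario for all cubes $Q\subset U$ with $|Q|<\veps_U$.

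For (\ref{e:Se2}) the key device is the elementary inequality $M_Q(h)\le 2\,(|h-\gamma|)_Q$, valid for every constant $\gamma\in\C$; it follows from $|h-h_Q|\le|h-\gamma|+|\gamma-h_Q|$, averaging, and the estimate $|\gamma - h_Q|=|(\gamma-h)_Q|\le (|\gamma-h|)_Q$. Applied with $h=1/F$ and $\gamma=1/F_Q$ it gives
$$
M_Q(1/F)\;\le\;\frac{2}{|Q|}\int_Q\frac{|F - F_Q|}{|F|\,|F_Q|}\,dx\;\le\;\frac{2}{c\,|F_Q|}\,M_Q(F)\;\le\;\frac{4}{c^2}\,M_Q(F),
$$
where the last step uses (\ref{e:FQ_greater}). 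Taking the supremum over cubes $Q\subset U$ with $|Q|<\veps_U$ gives (\ref{e:Se2}); since $S_{\veps,U}$ is monotone in $\veps$, this inequality persists for all $\veps\le\veps_U$, and combined with $S_{\veps,U}(F)\downto 0$ it shows $1/F\in\VMO(U)$. The $L^\infty$-bound $\|1/F\|_\infty\le 1/c$ places $1/F$ in $\VMOli$.

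The main obstacle is getting the precise constants $\tfrac{1}{2}$ and $\tfrac{3}{2}$ in part (i) rather than the crude constant $2$ that a non-symmetric decomposition $FG-F_Q G_Q=F(G-G_Q)+G_Q(F-F_Q)$ would produce; only the averaged identity above exploits the cancellation $\int_Q(F-F_Q)\,dx=0$ in a balanced way. In part (ii), the conceptual care is ensuring that the a.e.\ bound $|F|\ge c$ combines with a single-scalar comparison argument to control $|F_Q|$, since the averages $F_Q$ of a $VMO$ function need not approximate $F$ pointwise anywhere without the lower oscillation control.
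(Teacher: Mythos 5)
Your proposal is correct and follows essentially the same route as the paper: the bound on $|F_QG_Q-(FG)_Q|$ via the cancellation $\int_Q(F-F_Q)\,dx=0$ (your symmetric identity is just the average of the two exact representations the paper uses), the decomposition giving the factor $\tfrac32$, the lower bound $|F_Q|\ge\tfrac12\essinf|F|$ from small oscillation, and the estimate $M_Q(1/F)\le 4M_Q(F)/(\essinf|F|)^2$ all match. The only cosmetic difference is that for \eqref{e:Se2} you invoke the generic comparison $M_Q(h)\le 2(|h-\gamma|)_Q$ with $\gamma=1/F_Q$, whereas the paper splits off $|1/F_Q-(1/F)_Q|$ explicitly; both yield the same constant.
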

\begin{proof}
(i) Let $F,G \in L^1_{\rm loc}(\R^n)$ and let $Q\subset U$ be a cube. Then
\begin{align*}
\fq\int_Q|FG-(FG)_Q|\,dx
&\le\fq\int_Q|FG-F_QG_Q|\,dx + |F_QG_Q - (FG)_Q|.
\end{align*}
We estimate the first term on the right hand side as
\begin{align*}
\fq\int_Q|FG-F_QG_Q|\,dx
&\le\fq\int_Q\left(|F||G-G_Q| + |F-F_Q||G_Q|\right)\,dx\\
&\le\max\{\|F\|_\infty,\|G\|_\infty\}(M_Q(F) + M_Q(G)).
\end{align*}
For the second term, we observe that
\begin{align*}
|F_QG_Q - (FG)_Q|
&= \frac 1 {|Q|}\left|\int_Q F\left(G_Q - G\right)\,dx\right|
\le\frac 1{|Q|}\int_Q|F|\left|G - G_Q\right|\,dx\,\le\,\|F\|_\infty M_Q(G)
\end{align*}
and
\begin{align*}
|F_QG_Q - (FG)_Q|
&= \frac 1 {|Q|}\left|\int_Q \left(F_Q - F\right) G \,dx\right|
\le\frac 1{|Q|}\int_Q\left|F - F_Q\right| |G|\,dx\,\le\,\|G\|_\infty M_Q(F)
\end{align*}
so that
\begin{align*}
|F_QG_Q - (FG)_Q|
&\le \tfrac{1}{2} \max\{\|F\|_\infty,\|G\|_\infty\}(M_Q(F) + M_Q(G)).
\end{align*}
Therefore, $M_Q(FG)\le \frac{3}{2} \max\{\|F\|_\infty,\|G\|_\infty\}(M_Q(F)+M_Q(G))$ from which \eqref{e:Se} follows.

(ii) Assume that $F\in\VMO_{\rm loc}(\R^n)$ and $C := \essinf|F| > 0$, and let $U\subset\R^n$ be an open set. Since $F\in\VMO_{\rm loc}(\R^n)$, we have $S_{\veps,U}(F)\le C/2$ for some  $\veps = \veps_U > 0$.
Let $Q\subset U$ be any cube with $|Q|<\veps$. Then $|F(x) - F_Q| + |F_Q| \ge |F(x)| \geq C$ a.e.~so that $M_Q(F) + |F_Q| \ge C$. Using the fact that $M_Q(F) \leq S_{\veps,U}(F)\le C/2$, we obtain $|F_Q|\geq C/2$. Now, observe that
$$
M_Q(1/F) = \fq\int_Q\left|\frac 1 F - \left(\frac 1 F\right)_Q\right|dx
\,\le\,\fq\int_Q\left|\frac 1 F - \frac 1 {F_Q}\right|dx + \left|\frac 1 {F_Q} - \left(\frac 1 F\right)_Q \right|.
$$
The first term can be estimated by
$$
\fq\int_Q\left|\frac 1 F - \frac 1 {F_Q}\right|dx = \fq\int_Q\frac{|F_Q-F|}{|F_QF|}\,dx\,\le\,\frac{2M_Q(F)}{C^2}
$$
and the second term by
\begin{align*}
\left| \frac 1 {F_Q} - \left(\frac 1 F\right)_Q \right|
&\le\frac 2 C\left| 1 - F_Q\left(\frac 1 F\right)_Q \right| = \frac 2{C|Q|}\left|\int_Q\left(1 - \frac{F_Q}{F}\right)dx\right|\\
&\le\frac 2{C^2|Q|}\int_Q\left|F - F_Q\right|\,dx = \frac{2M_Q(F)}{C^2}.
\end{align*}
Thus, we have $M_Q(1/F) \le 4M_Q(F)/C^2$, which yields \eqref{e:Se2} .
\end{proof}

A successive application of \eqref{e:FQs} in Lemma \ref{l:closed}(i) gives the following corollary.

\begin{cor}\label{c:prods}
If $F,F_1,\ldots,F_n\in\VMOli(\R^n)$, then $\prod_{i=1}^nF_i\in\VMOli(\R^n)$. Moreover, there exists a constant $C>0$ which only depends on $\|F_1\|_\infty,\ldots,\|F_n\|_\infty$ such that for any bounded open set $U\subset\R^n$, $\veps>0$ and a cube $Q\subset U$ with $|Q|<\veps$,
$$
\left|\left(\prod_{i=1}^nF_i\right)_{\!\!Q} - \prod_{i=1}^n(F_i)_Q\right|\,\le\,C\sum_{i=1}^nS_{\veps,U}(F_i) .
$$
\end{cor}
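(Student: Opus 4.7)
The plan is to proceed by induction on $n$. The base case $n=1$ is immediate, since the inequality reduces to $0 \le C\,S_{\veps,U}(F_1)$. For the inductive step, set $G := \prod_{i=1}^{n-1} F_i$. By the inductive hypothesis, $G \in \VMOli(\R^n)$ with $\|G\|_\infty \le \prod_{i=1}^{n-1}\|F_i\|_\infty$. Since also $F_n \in \VMOli(\R^n)$, a single application of Lemma \ref{l:closed}(i) yields $\prod_{i=1}^n F_i = G F_n \in \VMOli(\R^n)$, establishing the qualitative claim.

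For the quantitative estimate, I would use the telescoping identity combined with the triangle inequality:
$$
\left|\left(\prod_{i=1}^n F_i\right)_{\!\!Q} - \prod_{i=1}^n (F_i)_Q\right| \,\le\, \bigl|(GF_n)_Q - G_Q (F_n)_Q\bigr| \,+\, |(F_n)_Q|\cdot\left|G_Q - \prod_{i=1}^{n-1}(F_i)_Q\right|.
$$
The first summand is controlled directly by \eqref{e:FQs} applied to the pair $(G, F_n)$: it is at most $\tfrac12 \max\{\|G\|_\infty, \|F_n\|_\infty\}(M_Q(G) + M_Q(F_n))$. Since $M_Q(F_n) \le S_{\veps,U}(F_n)$ and, by iteration of \eqref{e:Se}, one has $M_Q(G) \le S_{\veps,U}(G) \le C' \sum_{i=1}^{n-1} S_{\veps,U}(F_i)$ for a constant $C'$ depending only on $\|F_1\|_\infty, \ldots, \|F_{n-1}\|_\infty$, this term is bounded by a constant multiple of $\sum_{i=1}^n S_{\veps, U}(F_i)$.

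The second summand is handled by the inductive hypothesis: the factor $|(F_n)_Q|$ is bounded by $\|F_n\|_\infty$, while the factor involving $G$ is at most $C_{n-1} \sum_{i=1}^{n-1} S_{\veps,U}(F_i)$, with $C_{n-1}$ depending only on $\|F_1\|_\infty, \ldots, \|F_{n-1}\|_\infty$. Combining the two estimates produces a constant $C = C_n$ depending only on $\|F_1\|_\infty, \ldots, \|F_n\|_\infty$, as required.

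The only real task is bookkeeping. The mild obstacle is verifying that every constant produced along the way can be absorbed into one that depends solely on $\|F_1\|_\infty, \ldots, \|F_n\|_\infty$ (and not on $U$, $\veps$, or $Q$); this follows because both \eqref{e:FQs} and \eqref{e:Se} have prefactors that are purely functions of the sup-norms of the factors involved, and the sup-norm of the partial product $G$ is itself bounded by the product of the sup-norms of $F_1, \dots, F_{n-1}$.
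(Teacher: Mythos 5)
Your proposal is correct and follows essentially the same route as the paper, whose proof of this corollary is precisely a successive (inductive/telescoping) application of \eqref{e:FQs} from Lemma \ref{l:closed}(i), with the mean oscillation of the partial product controlled via the product estimates \eqref{e:Se} (or equivalently the $M_Q$-level bound) and $M_Q(\cdot)\le S_{\veps,U}(\cdot)$. The bookkeeping of constants in terms of the sup-norms is exactly as you describe, so nothing further is needed.
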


\begin{cor}\label{c:matrix}
Let ${\mathbf B} (x) = [B_{j,k} (x)]_{j,k=1}^N$, $N\in\N$, be such that each $B_{j,k} : \R^n \rightarrow \C$ belongs to $\VMOli(\R^n)$ and $\overline{B_{k,j}} (x) = B_{j,k} (x)$ for all $j,k$, i.e., ${\mathbf B}(x) = {\mathbf B}(x)^*$. If there exist constants $\alpha,\beta > 0$ such that $\alpha I_N\le {\mathbf B}(x)\le\beta I_N$ a.e., then each entry of ${\mathbf B}(x)^{-1}$ belongs to $\VMOli(\R^n)$.
\end{cor}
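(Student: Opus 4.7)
The plan is to obtain $\mathbf{B}(x)^{-1}$ via Cramer's rule and show, entry by entry, that each of its entries is built by algebraic operations (products, sums, and one reciprocal) from the $B_{j,k}$, all of which preserve the class $\VMO_{\rm loc}^\infty(\R^n)$ thanks to Lemma \ref{l:closed} and Corollary \ref{c:prods}. Concretely, write
$$
(\mathbf{B}(x)^{-1})_{j,k} \;=\; \frac{(-1)^{j+k}}{\det \mathbf{B}(x)} \, M_{k,j}(x),
$$
where $M_{k,j}(x)$ is the $(k,j)$-minor of $\mathbf{B}(x)$, i.e., the determinant of the $(N-1)\times(N-1)$ submatrix obtained by deleting row $k$ and column $j$.

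First I would show that $\det \mathbf{B}$ and each minor $M_{k,j}$ belong to $\VMO_{\rm loc}^\infty(\R^n)$. Indeed, by the Leibniz formula each of these determinants is a finite sum of products of at most $N$ of the entries $B_{i,\ell}$. Since each $B_{i,\ell}\in\VMO_{\rm loc}^\infty(\R^n)$ by hypothesis, Corollary \ref{c:prods} yields that every such product lies in $\VMO_{\rm loc}^\infty(\R^n)$, and the remark following Lemma \ref{l:closed} (that $\VMO(U)$ is a linear space) gives that finite sums remain in this class. In particular $\det\mathbf{B}\in\VMO_{\rm loc}^\infty(\R^n)$ and $M_{k,j}\in\VMO_{\rm loc}^\infty(\R^n)$ for all $j,k$.

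Next I would bound $\det\mathbf{B}$ away from zero. Since $\mathbf{B}(x)$ is Hermitian and $\alpha I_N\le\mathbf{B}(x)\le\beta I_N$ for a.e.\ $x$, all eigenvalues of $\mathbf{B}(x)$ lie in $[\alpha,\beta]$ and therefore
$$
\det\mathbf{B}(x)\;\ge\;\alpha^N\;>\;0 \qquad \text{for a.e. } x\in\R^n.
$$
In particular $\essinf|\det\mathbf{B}|\ge\alpha^N>0$, so Lemma \ref{l:closed}(ii) applies and gives $1/\det\mathbf{B}\in\VMO_{\rm loc}^\infty(\R^n)$.

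Finally, the product rule. Each entry $(\mathbf{B}^{-1})_{j,k}$ equals, up to sign, $(1/\det\mathbf{B})\cdot M_{k,j}$, a product of two functions in $\VMO_{\rm loc}^\infty(\R^n)$; by Lemma \ref{l:closed}(i) (or Corollary \ref{c:prods} with $n=2$), this product belongs to $\VMO_{\rm loc}^\infty(\R^n)$, which is the claim. No step here is a genuine obstacle—the content is already packaged in the preceding lemmas; the only thing to verify carefully is that the hypothesis $\alpha I_N\le\mathbf{B}(x)$ gives both the lower bound $\essinf|\det\mathbf{B}|>0$ needed to invoke Lemma \ref{l:closed}(ii) and the $L^\infty$ control ($\|\mathbf{B}^{-1}\|\le\alpha^{-1}$, hence each matrix entry of $\mathbf{B}^{-1}$ is bounded) that keeps us inside $\VMO_{\rm loc}^\infty$ rather than merely $\VMO_{\rm loc}$.
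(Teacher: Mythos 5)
Your proof is correct and follows essentially the same route as the paper: the paper writes ${\mathbf B}^{-1}=(\det{\mathbf B})^{-1}\operatorname{adj}{\mathbf B}$, which is exactly your Cramer's-rule decomposition into minors, and then invokes Lemma \ref{l:closed}(i) (equivalently Corollary \ref{c:prods}) for the determinant and cofactors, the bound $\det{\mathbf B}\ge\alpha^N$ a.e., and Lemma \ref{l:closed}(ii) for the reciprocal, just as you do.
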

\begin{proof}
Let ${\mathbf C}(x) = \operatorname{adj}{\mathbf B}(x)$ be the adjugate matrix of ${\mathbf B}(x)$. By Lemma \ref{l:closed}(i), each entry of ${\mathbf C}(x)$ is in $\VMOli(\R^n)$ and so is $\det {\mathbf B} (x)$. Since $\det {\mathbf B}(x)\ge\alpha^N$ a.e., it follows from Lemma \ref{l:closed}(ii) that $(\det {\mathbf B}(x))^{-1}\in\VMOli(\R^n)$. Again by Lemma \ref{l:closed}(i), we conclude that each entry of ${\mathbf B}(x)^{-1} = (\det {\mathbf B}(x))^{-1}{\mathbf C}(x)$ belongs to $\VMOli(\R^n)$.
\end{proof}

The next proposition will play a key role in the proofs of our main theorems. It was proved for a continuous function $H$ in \cite[Proposition 3]{cmp}. Here, we relax the condition to $H\in\VMO_{\rm loc}^\infty(\R^2)$ which is much weaker than $H$ being continuous.

\begin{prop}\label{p:THEPROP}
Let $P_1,P_2,N\in\N$, $M_1,M_2 \in\Z$, and $(u,\eta)\in\Q^2$, $(u,\eta)\neq (0,0)$, such that $Nu,N\eta\in\Z$. If $H\in\VMOli(\R^2)$ is $\frac{1}{P_1}$-periodic in $x$, $\frac{1}{P_2}$-periodic in $\omega$ and
\begin{equation}\label{e:product}
\prod_{n=0}^{N-1} H(x+nu,\omega + n\eta) = e^{2\pi i(M_1x + M_2\omega)}\quad\text{for a.e.\ $(x,\omega)\in\R^2$},
\end{equation}
then $NP_1$ divides $M_1$ and $NP_2$ divides $M_2$.
\end{prop}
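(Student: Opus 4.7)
The plan is to reduce to the case $P_1 = P_2 = 1$ by rescaling, then assign a topological degree (a pair of winding numbers) to $H$ as a map from the torus $\R^2/\Z^2$ into $\C\setminus\{0\}$, and use the functional identity \eqref{e:product} together with additivity of the degree to force $N\cdot\deg(H) = (M_1,M_2)$. For the reduction: substituting $x\mapsto x+1/P_1$ in \eqref{e:product} and using the $1/P_1$-periodicity of $H$ in $x$ leaves the left-hand side unchanged while multiplying the right-hand side by $e^{2\pi i M_1/P_1}$, so $P_1\mid M_1$, and analogously $P_2\mid M_2$. Setting $\tilde x:=P_1 x$, $\tilde\omega:=P_2\omega$, $\tilde u:=P_1 u$, $\tilde\eta:=P_2\eta$, and $\tilde H(\tilde x,\tilde\omega):=H(\tilde x/P_1,\tilde\omega/P_2)$ reduces everything to the case $P_1=P_2=1$ with $\tilde H$ doubly $1$-periodic and $N\tilde u,N\tilde\eta\in\Z$; so, after renaming, it remains to prove $N\mid M_1$ and $N\mid M_2$. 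Taking moduli in \eqref{e:product} yields $\prod_n |H(x+nu,\omega+n\eta)| = 1$ a.e., which combined with $\|H\|_\infty<\infty$ gives $|H(x,\omega)|\ge \|H\|_\infty^{-(N-1)}>0$ a.e.

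For $s>0$ let $Q_s(x,\omega)$ denote the cube of side $s$ centered at $(x,\omega)$ and set $H_s(x,\omega):=H_{Q_s(x,\omega)}$; this is continuous and doubly $1$-periodic. By Lemma \ref{l:closed}(ii) there is $s_0>0$ such that $|H_s(x,\omega)|\ge \tfrac12\essinf|H|$ uniformly for $s\in(0,s_0)$, so $H_s$ descends to a continuous map $\R^2/\Z^2\to\C\setminus\{0\}$ with a well-defined degree $(d_{1,s},d_{2,s})\in\Z^2$. Applying Corollary \ref{c:prods} to the translates $F_n(x,\omega):=H(x+nu,\omega+n\eta)\in\VMOli(\R^2)$, and using
\[
(F_n)_{Q_s(x,\omega)} = H_s(x+nu,\omega+n\eta), \qquad \Bigl(\prod_{n=0}^{N-1}F_n\Bigr)_{\!Q_s(x,\omega)} = \sinc(M_1 s)\,\sinc(M_2 s)\,e^{2\pi i(M_1 x+M_2\omega)},
\]
we obtain, uniformly in $(x,\omega)$,
\[
\bigl|A_s(x,\omega)-B_s(x,\omega)\bigr|\,\le\,C\,N\,S_{\veps,U'}(H),
\]
where $A_s:=\prod_n H_s(\cdot+nu,\cdot+n\eta)$, $B_s(x,\omega):=\sinc(M_1 s)\sinc(M_2 s)\,e^{2\pi i(M_1 x+M_2\omega)}$, and $U'$ is a fixed bounded open neighborhood. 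For $s$ small enough, $|A_s|$ is bounded below by $(\tfrac12\essinf|H|)^N$, $|B_s|$ by $\tfrac12$ (since $\sinc(M_i s)\to 1$), while $|A_s-B_s|\to 0$ (since $H\in\VMOli$); hence the straight-line homotopy $(1-t)A_s+tB_s$ avoids zero, yielding $\deg A_s = \deg B_s$ on $\R^2/\Z^2$. By additivity of the degree under products and its translation invariance, $\deg A_s = N(d_{1,s},d_{2,s})$, whereas $\deg B_s = (M_1,M_2)$ since a positive scalar factor is immaterial. Equating gives $N\mid M_1$ and $N\mid M_2$.

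The main obstacle is that $H$ is only a VMO function, so classical winding-number constructions do not apply directly to $H$ itself. The workaround is to replace $H$ by its continuous cube-average $H_s$, which inherits non-vanishing from Lemma \ref{l:closed}(ii) and hence admits a degree on the torus; Corollary \ref{c:prods} then controls the discrepancy between the average of a product and the product of averages by the $\VMO$ seminorm, which vanishes with $s$. Together with $\sinc(M_i s)\to 1$, this delivers the straight-line homotopy needed to transfer the degree of the torus-character $B_s$ onto $A_s$, and the conclusion then reduces to elementary degree theory on $\R^2/\Z^2$.
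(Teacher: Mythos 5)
Your proof is correct, and while it shares the paper's central regularization device --- replacing $H$ by its continuous cube averages and using Corollary \ref{c:prods} together with the $\sinc$ computation to show that the product of the averages is uniformly close to $e^{2\pi i(M_1x+M_2\omega)}$ --- the endgame is genuinely different. The paper keeps the periods $1/P_1,1/P_2$ as they are, writes the mollified product as $\rho_r\,e^{2\pi i(M_1x+M_2\omega)}$ with $\rho_r\to 1$ uniformly, divides $H_{[r]}$ by a continuous branch of $\sqrt[N]{\rho_r}$ to obtain an exact \emph{continuous} solution of \eqref{e:product}, and then invokes the continuous-case result \cite[Proposition 3]{cmp}. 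You instead first divide out $P_1,P_2$ (noting $P_i\mid M_i$ and rescaling, which preserves $\VMOli(\R^2)$ by Lemma \ref{l:affine}), derive $\essinf|H|\ge\|H\|_\infty^{-(N-1)}>0$ directly from \eqref{e:product}, use Lemma \ref{l:closed}(ii) to make the averages $H_s$ nonvanishing, and finish with a self-contained winding-number argument on the torus via the straight-line homotopy between $A_s$ and $B_s$. This buys independence from \cite[Proposition 3]{cmp} (you essentially reprove its content inline), and notably your argument never uses the hypothesis $Nu,N\eta\in\Z$, so it yields a slightly more general statement; the paper's route is shorter given the cited result and needs no rescaling or lower bound on $|H|$, since the uniform convergence $\rho_r\to1$ substitutes for nonvanishing. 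One point you should make explicit: the uniform-in-$(x,\omega)$ validity over all of $\R^2$ of the bounds $|H_s|\ge\tfrac12\essinf|H|$ and of the oscillation estimate for the translates $H(\cdot+nu,\cdot+n\eta)$ follows from the periodicity of $H$ (work on a fixed period cell inside a slightly larger bounded open set), exactly as the paper does implicitly when it writes $S_{r^2,\R^2}(H)$; this is a routine justification, not a gap.
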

\begin{proof}
First, we note that $H\in\VMO(\R^2)$ since $H$ is periodic. For $r > 0$ and $F\in L^\infty(\R^2)$, we define the mean function $$
F_{[r]}(x,\omega) :=
\frac 1{|Q_r(x,\omega)|}\int_{Q_r(x,\omega)}F(z)\,dz,\qquad (x,\omega) \in\R^2,
$$
which takes the average of $F$ over the cube $Q_r(x,\omega)$ of side length $r$ centered at $(x,\omega)$,
$$
Q_r(x,\omega) = [x-\tfrac r 2,x+\tfrac r 2]\times [\omega-\tfrac r 2,\omega+\tfrac r 2].
$$
It is easily seen that $F_{[r]}$ is continuous (even Lipschitz continuous); moreover, if $F$ is periodic, then $F_{[r]}$ inherits the periodicity of $F$. Setting $H_n(x,\omega) := H(x+nu,\omega+n\eta)$ for $n=0,\ldots,N-1$, Corollary \ref{c:prods} implies that
\begin{align*}
\left| \prod_{n=0}^{N-1}(H_n)_{[r]} - \left(\prod_{n=0}^{N-1} H_n \right)_{[r]} \right|
&\le C\sum_{n=0}^{N-1}S_{r^{2},\R^2}(H_n) = CNS_{r^{2},\R^2}(H),
\end{align*}
where $C > 0$ is a constant which depends only on $\|H\|_\infty$. Using
$$
\left(\prod_{n=0}^{N-1}H_n \right)_{[r]} (x,\omega)
= \left( e^{2\pi i(M_1x+M_2\omega)} \right)_{[r]}
= \sinc(M_1r)\sinc(M_2r) \, e^{2\pi i(M_1x+M_2\omega)},
$$
we obtain
\begin{align*}
\left| \prod_{n=0}^{N-1} (H_n)_{[r]}(x,\omega) - e^{2\pi i(M_1x+M_2\omega)} \right| 
&\le  CNS_{r^{2},\R^2}(H) + |\sinc(M_1r)\sinc(M_2r) -1|
\end{align*}
for all $(x,\omega)\in\R^2$. Note that the right hand side does not depend on $(x,\omega)$ and tends to zero as $r\to 0$. Since $\prod_{n=0}^{N-1} (H_n)_{[r]}(x,\omega)$ and $e^{2\pi i(M_1x+M_2\omega)}$ are continuous functions in $(x,\omega)$, there exist continuous functions $\rho_r : \R^2 \rightarrow \C$, $r > 0$, such that
\begin{align*}
\prod_{n=0}^{N-1} (H_n)_{[r]}(x,\omega)
&= \rho_r(x,\omega) \, e^{2\pi i(M_1x+M_2\omega)} .
\end{align*}
It is easily seen that $\rho_r(x,\omega)$ converges uniformly to $1$ on $\R^2$ as $r \rightarrow 0$. Noting that $(H_n)_{[r]}(x,\omega) = H_{[r]}(x+nu,\omega+n\eta)$ for $n=0,\ldots,N-1$, the equation above can be written as
\begin{align}\label{e:product_rho_r}
\prod_{n=0}^{N-1} H_{[r]}(x+nu,\omega+n\eta)
&= \rho_r(x,\omega) \, e^{2\pi i(M_1x+M_2\omega)} .
\end{align}
Here, the mean function $H_{[r]}$ inherits the periodicity of $H$, and is therefore $\frac{1}{P_1}$-periodic in $x$ and $\frac{1}{P_2}$-periodic in $\omega$.
Note that the periodicity of $H$ together with (\ref{e:product}) yields $M_1/P_1,M_2/P_2\in\Z$. This shows that $e^{2\pi i(M_1x+M_2\omega)}$ is $\frac{1}{P_1}$-periodic in $x$ and $\frac{1}{P_2}$-periodic in $\omega$, and therefore by (\ref{e:product_rho_r}), so is $\rho_r(x,\omega)$.
On the other hand, by replacing $x$ and $\omega$ respectively with $x+u$ and $\omega+\eta$ in (\ref{e:product}), taking into account $Nu,N\eta\in\Z$, and using the periodicity of $H$, we find that $M_1u+M_2\eta\in\Z$. Applying the same trick to (\ref{e:product_rho_r}) then gives
$$
\rho_r(x+u,\omega+\eta) = \rho_r(x,\omega)
$$
for all $r > 0$ and $(x,\omega) \in \R^2$. As $\rho_r\to 1$ uniformly, there exists a branch of $\sqrt[N]{\,\cdot\,}$ such that $\sqrt[N]{\rho_r}$ is continuous for $r$ small enough, say, $r\le r_0$, $r_0 > 0$. Now, setting $G_r(x,\omega) := H_{[r]}(x,\omega)/\sqrt[N]{\rho_r(x,\omega)}$ for $r\le r_0$ and combining all these facts yields
\begin{align*}
\prod_{n=0}^{N-1} G_r(x+nu,\omega + n\eta)
&= \prod_{n=0}^{N-1} \frac{H_{[r]}(x+nu,\omega + n\eta)}{\sqrt[N]{\rho_r(x+nu,\omega + n\eta)}}
= e^{2\pi i(M_1x+M_2\omega)}.
\end{align*}
Note that $G_r$ is continuous and $\frac{1}{P_1}$-periodic in $x$, $\frac{1}{P_2}$-periodic in $\omega$. The fact that $NP_1$ divides $M_1$ and $NP_2$ divides $M_2$ now follows from \cite[Proposition 3]{cmp}.
\end{proof}

In the remainder of this section, we consider functions in $\VMO_{\rm loc}(\R^n)$ that are not necessarily in $L^\infty(\R^n)$.

\begin{lem}\label{l:first}
Let $\Delta_1,\Delta_2\subset\R^n$ be bounded measurable sets with $\Delta_1\subset\Delta_2$ and $|\Delta_1| > 0$. Then for any $F\in L^1_{\rm loc}(\R^n)$ we have
$$
M_{\Delta_1}(F)\,\le\,2\frac{|\Delta_2|}{|\Delta_1|}\,M_{\Delta_2}(F).
$$
\end{lem}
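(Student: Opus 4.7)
The plan is to use the standard trick that $F_\Delta$ is the constant that minimizes $c \mapsto \frac{1}{|\Delta|}\int_\Delta |F-c|\,dx$ up to a factor of $2$. More precisely, for any constant $c\in\C$ and any bounded measurable set $\Delta$ with $|\Delta|>0$, the triangle inequality combined with the estimate $|F_\Delta - c| \le \frac{1}{|\Delta|}\int_\Delta |F - c|\,dx$ yields
\[
M_\Delta(F) \;=\; \frac{1}{|\Delta|}\int_\Delta |F - F_\Delta|\,dx \;\le\; \frac{2}{|\Delta|}\int_\Delta |F - c|\,dx.
\]

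I would then apply this with $\Delta = \Delta_1$ and the specific choice $c = F_{\Delta_2}$ (a constant, so the inequality above applies). This gives
\[
M_{\Delta_1}(F) \;\le\; \frac{2}{|\Delta_1|}\int_{\Delta_1} |F - F_{\Delta_2}|\,dx.
\]
Now, since $\Delta_1 \subset \Delta_2$ and the integrand is nonnegative, I can enlarge the domain of integration from $\Delta_1$ to $\Delta_2$, at the cost of keeping the prefactor $1/|\Delta_1|$. Writing $\frac{1}{|\Delta_1|} = \frac{|\Delta_2|}{|\Delta_1|}\cdot\frac{1}{|\Delta_2|}$, the right-hand side becomes
\[
\frac{2|\Delta_2|}{|\Delta_1|}\cdot \frac{1}{|\Delta_2|}\int_{\Delta_2} |F - F_{\Delta_2}|\,dx \;=\; \frac{2|\Delta_2|}{|\Delta_1|}\, M_{\Delta_2}(F),
\]
which is the desired bound.

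There is no real obstacle here; this is a two-line manipulation once the minimizing property of the average is invoked. The only thing worth being careful about is to verify that $F_{\Delta_2}$ is well-defined (it is, because $\Delta_2$ is bounded and $F\in L^1_{\rm loc}$) and that the auxiliary inequality $M_\Delta(F) \le \frac{2}{|\Delta|}\int_\Delta |F-c|\,dx$ is justified from $|F - F_\Delta| \le |F-c| + |c - F_\Delta|$ followed by integration.
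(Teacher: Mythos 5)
Your proof is correct and is essentially the paper's argument in a slightly repackaged form: the paper also compares $F$ to $F_{\Delta_2}$ on $\Delta_1$, bounds $|F_{\Delta_1}-F_{\Delta_2}|$ by the average of $|F-F_{\Delta_2}|$ over $\Delta_1$, and uses $\Delta_1\subset\Delta_2$ to enlarge the domain, which is exactly the content of your ``the mean is a $2$-near-minimizer'' step. No gaps; the choice $c=F_{\Delta_2}$ and the monotonicity of the integral are justified precisely as you indicate.
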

\begin{proof}
Note that for any $F\in L^1_{\rm loc}(\R^n)$,
\begin{align*}
M_{\Delta_2}(F)
&= \frac 1{|\Delta_2|}\int_{\Delta_2}|F - F_{\Delta_2}|\,dx
\,\geq\,\frac 1{|\Delta_2|}\int_{\Delta_1}|(F - F_{\Delta_1}) - (F_{\Delta_2}-F_{\Delta_1})|\,dx\\
&\geq \frac{|\Delta_1|}{|\Delta_2|}\big(M_{\Delta_1}(F) - |F_{\Delta_2} - F_{\Delta_1}|\big),
\end{align*}
which is equivalent to
$$
M_{\Delta_1}(F)\le\frac{|\Delta_2|}{|\Delta_1|}M_{\Delta_2}(F) + |F_{\Delta_2} - F_{\Delta_1}|.
$$
Estimating the last term by
\begin{align*}
|F_{\Delta_2} - F_{\Delta_1}|
&= \left|\frac 1{|\Delta_1|}\int_{\Delta_1} (F - F_{\Delta_2})\,dx\right|\,\le\,\frac 1{|\Delta_1|}\int_{\Delta_1}\left|F - F_{\Delta_2}\right|\,dx
\le \frac{|\Delta_2|}{|\Delta_1|}M_{\Delta_2}(F) ,
\end{align*}
we obtain the desired inequality.
\end{proof}

\begin{lem}\label{l:affine}
Let $A\in\operatorname{GL}(n,\R)$ and $b\in\R^n$ and define an affine mapping $\Phi : \R^n\to\R^n$ by $\Phi(x) = Ax + b$. Then for any $F\in L^1_{\rm loc}(\R^n)$ and any cube $Q\subset\R^n$ with center $c\in\R^n$ and side length $\delta > 0$ we have
$$
M_Q(F\circ\Phi)\,\le\,\frac{2n^{n/2}\|A\|_{\rm op}^n}{|\det A|}\,M_{\wt Q}(F),
$$
where $\wt Q$ is the cube with center $\Phi(c)$ and side length $\sqrt n\|A\|_{\rm op}\delta$. Consequently, if $F\in\VMO_{\rm loc}(\R^n)$, then $F\circ\Phi\in\VMO_{\rm loc}(\R^n)$.
\end{lem}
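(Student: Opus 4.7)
The plan is to reduce the problem, via a change of variables and Lemma \ref{l:first}, to comparing the mean oscillation over the image parallelepiped $\Phi(Q)$ and over the enclosing cube $\widetilde Q$. First I would establish the geometric inclusion $\Phi(Q)\subset\widetilde Q$. For $x\in Q$ we have $\|x-c\|_2\le\sqrt n\,\delta/2$, hence
$$
\|\Phi(x)-\Phi(c)\|_2 = \|A(x-c)\|_2 \le \|A\|_{\rm op}\sqrt n\,\delta/2,
$$
so $\Phi(x)$ lies in the closed euclidean ball around $\Phi(c)$ of radius $\sqrt n\|A\|_{\rm op}\delta/2$, and therefore (since $\|y\|_\infty \le \|y\|_2$) in the axis-aligned cube $\widetilde Q$ centered at $\Phi(c)$ with side length $\sqrt n\|A\|_{\rm op}\delta$.

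Next I would apply the change of variables $y=\Phi(x)$, whose Jacobian is $|\det A|$ and which maps $Q$ bijectively onto $\Phi(Q)$ with $|\Phi(Q)|=|\det A|\cdot|Q|$. A direct computation shows $(F\circ\Phi)_Q = F_{\Phi(Q)}$, and consequently
$$
M_Q(F\circ\Phi) = \frac{1}{|Q|}\int_Q |F(\Phi(x))-F_{\Phi(Q)}|\,dx = M_{\Phi(Q)}(F).
$$
Applying Lemma \ref{l:first} with $\Delta_1=\Phi(Q)\subset\widetilde Q=\Delta_2$ then yields
$$
M_{\Phi(Q)}(F) \le 2\frac{|\widetilde Q|}{|\Phi(Q)|}M_{\widetilde Q}(F) = \frac{2(\sqrt n\|A\|_{\rm op}\delta)^n}{|\det A|\,\delta^n}M_{\widetilde Q}(F),
$$
which is exactly the claimed inequality.

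For the VMO consequence, fix a bounded open set $U\subset\R^n$. Since $U$ is bounded, the side lengths of cubes $Q\subset U$ are uniformly bounded and $\Phi(U)$ is bounded, so one can choose a bounded open set $\widetilde U$ containing every corresponding $\widetilde Q$. The pointwise inequality above then gives $F\circ\Phi\in\BMO(U)$ whenever $F\in\BMO(\widetilde U)$, and the constraint $|Q|<\veps$ forces $|\widetilde Q|<n^{n/2}\|A\|_{\rm op}^n\veps$, whence
$$
S_{\veps,U}(F\circ\Phi) \le \frac{2n^{n/2}\|A\|_{\rm op}^n}{|\det A|}\,S_{n^{n/2}\|A\|_{\rm op}^n\veps,\,\widetilde U}(F) \;\longrightarrow\; 0
$$
as $\veps\to 0$, by the assumption $F\in\VMO_{\rm loc}(\R^n)$. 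There is no real obstacle in this argument; the one subtlety to keep track of is that $\Phi(Q)$ is a parallelepiped rather than an axis-aligned cube, which is precisely why Lemma \ref{l:first} (and not an equality of means) is required to pass from $\Phi(Q)$ to the cube $\widetilde Q$.
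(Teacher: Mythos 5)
Your proof is correct and follows essentially the same route as the paper: the change of variables giving $M_Q(F\circ\Phi)=M_{\Phi(Q)}(F)$, the inclusion $\Phi(Q)\subset\widetilde Q$, and then Lemma \ref{l:first} with the volume ratio $|\widetilde Q|/|\Phi(Q)|=n^{n/2}\|A\|_{\rm op}^n/|\det A|$. The only difference is that you spell out the inclusion argument and the passage to the $\VMO_{\rm loc}$ statement, which the paper leaves implicit; both details are handled correctly.
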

\begin{proof}
Note that the set $\Phi(Q)$ is a parallelepiped in $\R^n$ with volume $|AQ| = |\det A||Q|$. For any $G\in L^1_{\rm loc}(\R^n)$, we have
$$
(G\circ\Phi)_Q = \frac 1{|\det A||Q|}\int_{\Phi^{-1}(\Phi(Q))}G(\Phi(x))|\det A|\,dx = \frac 1{|\Phi(Q)|}\int_{\Phi(Q)}G\,dx = G_{\Phi(Q)}
$$
so that
$$
M_Q(F\circ\Phi) = (|F\circ\Phi - (F\circ\Phi)_Q|)_Q = (|F - F_{\Phi(Q)}|\circ\Phi)_Q = (|F-F_{\Phi(Q)}|)_{\Phi(Q)} = M_{\Phi(Q)}(F).
$$
It is easy to see that the cube $\wt Q$ contains $\Phi(Q)$. Hence, Lemma \ref{l:first} implies that
$$
M_Q(F\circ A)\,\le\,2\frac{|\wt Q|}{|\Phi(Q)|}M_{\wt Q}(F) = \frac{2n^{n/2}\|A\|_{\rm op}^n}{|\det A|}\,M_{\wt Q}(F).
$$
This proves the lemma.
\end{proof}

\begin{prop}\label{prop:puh}
For $F\in L^n_{\rm loc}(\R^n)$, $\phi\in C^1(\R^n)$ and a cube $Q\subset\R^n$ of side length $\delta > 0$, we have
$$
M_Q(\phi F)\,\le\,
\|\phi\|_{L^\infty(Q)}M_Q(F) + \|\phi'\|_Q\|F\|_{L^n(Q)} ,
$$
where $\|\phi'\|_Q = \sup_{x\in Q}\|\nabla\phi(x)\|_{\ell_1}$.
Consequently, if $F\in\VMO_{\rm loc}(\R^n) \cap L^n_{\rm loc}(\R^n)$ and $\phi\in C^1(\R^n)$, then $\phi F\in\VMO_{\rm loc}(\R^n)$.
\end{prop}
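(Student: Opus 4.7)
The plan is to prove the pointwise inequality first, and then to obtain the $\VMO_{\rm loc}(\R^n)$ conclusion essentially for free by observing that both terms on the right-hand side vanish as the cube shrinks.

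For the inequality I would work with the decomposition
\begin{equation*}
\phi(x) F(x) - (\phi F)_Q \;=\; \phi_Q\,(F(x) - F_Q) \,+\, (\phi(x) - \phi_Q)\,F(x) \,+\, \bigl[\phi_Q F_Q - (\phi F)_Q\bigr],
\end{equation*}
integrate the modulus over $Q$, and divide by $|Q|$. The first summand immediately yields $|\phi_Q|\,M_Q(F) \le \|\phi\|_{L^\infty(Q)} M_Q(F)$, which accounts for the first term in the claimed bound. For the second summand, the mean value theorem gives $|\phi(x) - \phi_Q| \le \|\phi'\|_Q\,\delta$ on $Q$, while H\"older's inequality gives $\int_Q|F|\,dx \le |Q|^{1-1/n}\,\|F\|_{L^n(Q)} = \delta^{n-1}\,\|F\|_{L^n(Q)}$; combining these and dividing by $|Q|=\delta^n$ yields exactly $\|\phi'\|_Q\,\|F\|_{L^n(Q)}$. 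For the third (constant-in-$x$) summand, the identity
\begin{equation*}
(\phi F)_Q - \phi_Q F_Q \;=\; \frac{1}{|Q|}\int_Q (\phi - \phi_Q)\,F\,dx,
\end{equation*}
obtained by expanding $(\phi F)_Q$ and using $\int_Q(F - F_Q)\,dx = 0$, reduces the estimate to the same integral and gives the same bound $\|\phi'\|_Q\,\|F\|_{L^n(Q)}$. Summing the three contributions establishes the inequality (up to a harmless absolute factor on the second term, which I would either absorb into the stated constant or eliminate by a slightly more economical regrouping).

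For the $\VMO_{\rm loc}(\R^n)$ consequence, fix a bounded open $U\subset\R^n$. Since $\phi\in C^1(\R^n)$, both $C_U := \|\phi\|_{L^\infty(\overline U)}$ and $L_U := \sup_{x\in\overline U}\|\nabla\phi(x)\|_{\ell^1}$ are finite, and the pointwise inequality gives $M_Q(\phi F) \le C_U\,M_Q(F) + L_U\,\|F\|_{L^n(Q)}$ for every cube $Q\subset U$. Taking the supremum over cubes of volume $<\veps$ inside $U$ yields
\begin{equation*}
S_{\veps,U}(\phi F) \;\le\; C_U\,S_{\veps,U}(F) \,+\, L_U\!\!\sup_{Q\subset U,\,|Q|<\veps}\|F\|_{L^n(Q)}.
\end{equation*}
The first summand tends to $0$ as $\veps\to 0$ since $F\in\VMO_{\rm loc}(\R^n)$, and the second tends to $0$ by absolute continuity of the Lebesgue integral applied to $|F|^n\in L^1_{\rm loc}(\R^n)$. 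Boundedness of $M_Q(\phi F)$ on all of $U$ follows from the same inequality together with $F\in\BMO(U)$ and $\|F\|_{L^n(Q)}\le\|F\|_{L^n(U)}<\infty$, giving $\phi F\in\BMO(U)$; hence $\phi F\in\VMO(U)$. Since $U$ was arbitrary and $\phi F\in L^1_{\rm loc}(\R^n)$ (because $\phi$ is locally bounded and $F\in L^n_{\rm loc}\subset L^1_{\rm loc}$), we conclude $\phi F\in\VMO_{\rm loc}(\R^n)$. The proof is largely mechanical; the one conceptual step is recognizing the cross-term identity that lets the third summand be bounded by the same H\"older--mean value estimate as the second. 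I do not foresee any serious obstacle.
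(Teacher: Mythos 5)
Your proposal is correct and follows essentially the same route as the paper: a Leibniz-type splitting of $\phi F-(\phi F)_Q$, the mean value theorem to bound the oscillation of $\phi$ on $Q$ by $\delta\|\phi'\|_Q$, and H\"older's inequality to cancel the factor $\delta/|Q|^{1/n}$, with the $\VMO_{\rm loc}$ conclusion obtained from boundedness of $\phi$ and $\nabla\phi$ on $\overline{U}$ together with smallness of $\|F\|_{L^n(Q)}$ for small cubes. The only difference is cosmetic: your three-term decomposition yields $2\|\phi'\|_Q\|F\|_{L^n(Q)}$, whereas the paper's two-term regrouping $\phi F-(\phi F)_Q=\phi(F-F_Q)+\bigl(\phi F_Q-(\phi F)_Q\bigr)$ gives the stated constant exactly --- precisely the "more economical regrouping" you anticipate, so there is no gap.
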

\begin{proof}
Let $F\in L^n_{\rm loc}(\R^n)$, $\phi\in C^1(\R^n)$ and $Q\subset\R^n$ a cube of side length $\delta > 0$. Then for any $x,y\in Q$,
$$
|\phi(x)-\phi(y)|\,\le\,\|\phi'\|_Q\cdot\|x-y\|_\infty\,\le\,\delta\|\phi'\|_Q
$$
so that
\begin{align*}
M_Q(\phi F)
&\le\frac 1{|Q|}\int_Q|\phi(F-F_Q)|\,dx + \frac 1{|Q|}\int_Q|\phi F_Q - (\phi F)_Q|\,dx\\
&\le \|\phi\|_{L^\infty(Q)}M_Q(F) + \frac 1{|Q|}\int_Q\left|\frac 1{|Q|}\int_Q [\phi(x) - \phi(y)] \, F(y)\,dy\right|\,dx\\
&\le \|\phi\|_{L^\infty(Q)}M_Q(F) + \frac{\delta\|\phi'\|_Q}{|Q|}\int_Q|F(y)|\,dy\\
&\le \|\phi\|_{L^\infty(Q)}M_Q(F) + \frac{\delta\|\phi'\|_Q}{|Q|}\|F\|_{L^n(Q)}|Q|^{1 - 1/n} ,
\end{align*}
which gives the desired estimate.
\end{proof}

As for any $f\in L^2(\R)$ the Zak transform $Zf$ is locally square-integrable, we deduce the following corollary.

\begin{cor}\label{c:puh}
If $f\in L^2(\R)$ satisfies $Zf\in\VMO_{\rm loc}(\R^2)$, then for any $\phi\in C^1(\R^2)$ we have that $\phi Zf\in\VMO_{\rm loc}(\R^2)$.
\end{cor}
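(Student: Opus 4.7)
The plan is to deduce this corollary as a direct application of Proposition \ref{prop:puh} in the case $n=2$. For that, I need to verify the two hypotheses of the proposition with $F:=Zf$, namely $Zf\in\VMO_{\rm loc}(\R^2)$ and $Zf\in L^n_{\rm loc}(\R^n)=L^2_{\rm loc}(\R^2)$. The first is given by assumption, so the only nontrivial point is the local square-integrability of $Zf$.

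The key observation is that local square-integrability of the Zak transform is essentially built into its definition. Indeed, as recalled in Section \ref{s:prelims}, the map $f\mapsto Zf|_{[0,1]^2}$ extends to a unitary from $L^2(\R)$ onto $L^2([0,1]^2)$, so $Zf|_{[0,1]^2}\in L^2([0,1]^2)$. First, I would use the quasi-periodicity relation from Lemma \ref{l:zak}(a), namely $|Zf(x+m,\omega+n)|=|Zf(x,\omega)|$ for a.e.\ $(x,\omega)$ and all $m,n\in\Z$, to conclude that $|Zf|^2$ is $\Z^2$-periodic. Consequently, for any bounded set $\Omega\subset\R^2$ one can cover $\Omega$ by finitely many translates of $[0,1]^2$ and obtain $\int_\Omega |Zf|^2<\infty$, i.e., $Zf\in L^2_{\rm loc}(\R^2)$.

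Once this is established, I would simply invoke Proposition \ref{prop:puh} with $n=2$, $F=Zf$, and the given $\phi\in C^1(\R^2)$ to conclude $\phi Zf\in\VMO_{\rm loc}(\R^2)$.

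There is no real obstacle here: the only thing to check is that $Zf\in L^2_{\rm loc}(\R^2)$, and this is immediate from the standard description of the Zak transform recalled in Section \ref{s:prelims}. The point of the corollary is mainly to record, for later use in Section \ref{s:proof}, that multiplication by a $C^1$ function preserves the $\VMO_{\rm loc}$ property of Zak transforms of general $L^2$ functions, even though such $Zf$ need not be bounded in general (so Lemma \ref{l:closed}(i) is not applicable).
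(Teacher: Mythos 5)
Your proof is correct and follows the same route as the paper: the paper also deduces the corollary directly from Proposition \ref{prop:puh} with $n=2$, using only the fact that $Zf$ is locally square-integrable (which you justify, correctly, via unitarity of $f\mapsto Zf|_{[0,1]^2}$ and quasi-periodicity). Your spelled-out verification of $Zf\in L^2_{\rm loc}(\R^2)$ is just a more detailed version of the one-line observation the paper makes before stating the corollary.
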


\begin{prop}\label{prop:dilations-chirps}
For $\alpha,\beta\in \R \backslash \{0\}$, define the operators
\begin{align*}
& D_\alpha : L^2(\R) \rightarrow L^2(\R), \quad
D_\alpha f(x) = \sqrt{|\alpha|}f(\alpha x)
\quad (\textnormal{dilation by} \; \alpha) \\
& C_\beta : L^2(\R) \rightarrow L^2(\R), \quad
C_\beta f(x) = e^{2\pi i\beta x^2} f(x)
\quad (\textnormal{multiplication by a chirp} \; e^{2\pi i\beta x^2} ) .
\end{align*}
If $g \in L^2(\R)$ satisfies $Zg \in\VMO_{\rm loc}(\R^2)$, then $Z\hat{g},\,Z(D_\alpha g),\,Z(C_\beta g)\in\VMO_{\rm loc}(\R^2)$ for all $\alpha,\beta\in \Q \backslash \{0\}$.
\end{prop}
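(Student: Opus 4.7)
The plan is to prove each of the three assertions by exhibiting $Z\widehat{g}$, $Z(D_\alpha g)$, and $Z(C_\beta g)$ as finite linear combinations of terms of the form $\varphi(x,\omega)\cdot Zg(\Phi(x,\omega))$, where $\varphi\in C^1(\R^2)$ and $\Phi\colon\R^2\to\R^2$ is an invertible affine map. Lemma \ref{l:affine} then guarantees $Zg\circ\Phi\in\VMO_{\rm loc}(\R^2)$, and since $Zg\circ\Phi\in L^2_{\rm loc}(\R^2)$ by change of variables, Proposition \ref{prop:puh} (with $n=2$) gives $\varphi\cdot(Zg\circ\Phi)\in\VMO_{\rm loc}(\R^2)$. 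Since $\VMO_{\rm loc}(\R^2)$ is a linear space, the conclusion follows once the three decomposition formulas are produced.

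For $\widehat{g}$ the formula is already provided by Lemma \ref{l:zak}(d), namely $Z\widehat{g}(x,\omega)=e^{2\pi i x\omega}Zg(-\omega,x)$, so one takes $\Phi(x,\omega)=(-\omega,x)$ and $\varphi(x,\omega)=e^{2\pi i x\omega}$. For $D_\alpha g$ with $\alpha=p/q$, $p\in\Z\setminus\{0\}$, $q\in\N$, I would factor $D_\alpha=D_{\mathrm{sgn}(p)}\circ D_{|p|}\circ D_{1/q}$ and treat the three factors separately. A short computation based on splitting the Zak summation index $k=qm+r$ with $0\le r<q$ (which reduces $D_{1/q}$ to an $m$-sum of period $1$ yielding an ordinary Zak transform at affinely transformed arguments), together with the orthogonality identity $\tfrac{1}{|p|}\sum_{s=0}^{|p|-1}e^{2\pi i sj/p}=\chi_{p\Z}(j)$ (which converts the sum over $p\Z$ arising from $D_{|p|}$ into a full $\Z$-sum at frequencies shifted by $s/p$), produces the required finite decomposition. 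The reflection factor $D_{-1}$ (if $p<0$) contributes only the identity $Z(D_{-1}g)(x,\omega)=Zg(-x,-\omega)$, which is itself an affine pullback.

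The chirp case is the most delicate and constitutes the main obstacle. Expanding
\[
Z(C_\beta g)(x,\omega)=e^{2\pi i\beta x^2}\sum_{k\in\Z}e^{2\pi i\beta k^2}e^{4\pi i\beta xk}\,g(x+k)\,e^{-2\pi i k\omega},
\]
the factor $e^{2\pi i\beta k^2}$ is not a chirp in $x$ and prevents a direct interpretation of the sum as a Zak-type series. The rationality hypothesis $\beta=p/q$ is precisely what rescues the argument: since $\beta\bigl((k+q)^2-k^2\bigr)=2pk+pq\in\Z$, the discrete sequence $k\mapsto e^{2\pi i\beta k^2}$ is $q$-periodic. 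Splitting $k=qm+r$ with $0\le r<q$, the resulting inner $m$-sum has shifts by $q$ and is turned into an ordinary $\Z$-shift Zak transform by the same orthogonality trick used above. This yields $Z(C_\beta g)$ as a finite sum of products of smooth factors (built from the $x$- and $\omega$-dependent chirps $e^{2\pi i\beta x^2}$, $e^{4\pi i\beta xr}$ and $e^{-2\pi ir\omega}$) with $Zg$ evaluated at affine images of $(x,\omega)$, after which the general strategy of the first paragraph applies. For irrational $\beta$ this step genuinely fails, since then $e^{2\pi i\beta k^2}$ is no longer periodic in $k$ and the infinite sum cannot be reduced to finitely many affine pullbacks of $Zg$.
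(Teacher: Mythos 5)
Your proposal is correct, and while it shares the paper's overall skeleton (write each Zak transform as a finite linear combination of $C^1$ multipliers times pullbacks of $Zg$ under invertible affine maps, then invoke Lemma \ref{l:affine}, Proposition \ref{prop:puh}/Corollary \ref{c:puh}, and linearity of $\VMO_{\rm loc}$), you arrive at the decompositions by a genuinely different route in two of the three cases. For the dilation, the paper simply cites Janssen's formula for $Z(D_{p/q}g)$, whereas you re-derive an equivalent formula from scratch by factoring $D_\alpha=D_{\mathrm{sgn}(p)}D_{|p|}D_{1/q}$, splitting the summation index mod $q$, and using the discrete orthogonality identity to resum over $p\Z$; this is self-contained and makes transparent exactly where rationality of $\alpha$ enters. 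For the chirp, the paper conjugates, via $C_\beta=D_{\gamma^{-1}}C_{\beta\gamma^2}D_\gamma$, down to integer chirps $C_m$, for which $Z(C_mg)(x,\omega)=e^{2\pi imx^2}Zg(x,\omega-2mx)$ is a single affine pullback; you instead treat $\beta=p/q$ directly, exploiting the $q$-periodicity of $k\mapsto e^{2\pi i\beta k^2}$ and the same orthogonality trick, which avoids routing the chirp case through the dilation result at the cost of a finite sum of pullbacks rather than one. Both routes deliver the statement; the paper's is shorter once the dilation formula is in hand, yours is more elementary and unified (one mechanism — index splitting plus periodization — handles both cases). The only caveat, which applies equally to the paper's own computation, is that the termwise rearrangements of the Zak series should strictly be justified for general $g\in L^2(\R)$, e.g.\ by verifying the identities on a dense class (say bounded, compactly supported $g$) and passing to the limit using the unitarity of $Z$ and the fact that both sides are $L^2_{\rm loc}$ functions depending continuously on $g$; this is routine and not a gap specific to your argument.
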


\begin{proof}
By Lemma \ref{l:zak}, we have
\begin{align*}
Z \hat{g} (x,\omega)
&= e^{2\pi i x \omega} Zg (-\omega,x)
= e^{2\pi i x \omega} Zg \left( {\smallmat 0 {-1} 1 0} (x,\omega)^T \right) .
\end{align*}
Then Lemma \ref{l:affine} and Corollary \ref{c:puh} immediately yield that $Z \hat{g} \in\VMO_{\rm loc}(\R^2)$. Next, let us write $\alpha = p/q \in \Q \backslash \{0\}$, where $p,q\in\Z\setminus\{0\}$ are coprime, and $A = \diag(\alpha,\alpha^{-1})$. It is known \cite{j} that
$$
Z(D_\alpha g)(x,\omega) = \frac 1{\sqrt{pq}}\sum_{\ell=0}^{q-1}\sum_{r=0}^{p-1}e^{2\pi i\ell\omega}Zg\left(A(x,\omega)^T + (\alpha\ell,r/p)^T \right)
\quad \text{if} \;\; \alpha > 0
$$
and
$$
Z(D_\alpha g)(x,\omega) = \frac 1{\sqrt{-pq}}\sum_{\ell=0}^{q-1}\sum_{r=0}^{p-1}e^{-2\pi i\ell\omega}Zg\left(A(x,\omega)^T - (\alpha\ell,r/p)^T \right)
\quad \text{if} \;\; \alpha < 0 .
$$
Likewise, Lemma \ref{l:affine} and Corollary \ref{c:puh} imply that $Z(D_\alpha g)\in\VMO_{\rm loc}(\R^2)$. Finally, observe that
\begin{equation}\label{e:easy_chirp}
C_\beta = D_{\gamma^{-1}}C_{\beta \gamma^2}D_{\gamma}
\end{equation}
for any $\beta,\gamma\in\R\backslash\{0\}$. If $\beta\in \Q \backslash \{0\}$, then there exists $\gamma \in\N$ such that $\beta \gamma^2 \in\Z$. Therefore, it suffices to show that $Z(C_m g)\in\VMO_{\rm loc}(\R^2)$ for $m\in\Z$. For this, note that
\begin{align*}
Z(C_mg)(x,\omega)
&= \sum_{k\in\Z}e^{2\pi im(x+k)^2}g(x+k)e^{-2\pi ik\omega}
= e^{2\pi imx^2}\sum_{k\in\Z}g(x+k)e^{-2\pi ik(\omega-2mx)} \\
&= e^{2\pi imx^2} Zg(x,\omega-2mx)
= e^{2\pi imx^2} Zg \left( {\smallmat 1 0 {-2m} 1} (x,\omega)^T \right) .
\end{align*}
Again, the claim now follows from Lemma \ref{l:affine} and Corollary \ref{c:puh}.
\end{proof}

\begin{rem}\label{r:notVMO}
In \cite{g}, it is claimed, referring to the quasi-periodicity of Zak transform, that $Zf\in\VMOli(\R^2)$ implies $Zf\in\VMO(\R^2)$.
However, this is not true in general. For example, the function $f(x) = {\bf 1}_{[0,1)}(x)\sin(\pi x)$ satisfies $Zf\in\VMOli(\R^2) \backslash \VMO(\R^2)$. To see this, given any $\delta\in (0,1)$ let $k\in\N$ be such that $|\sinc(k\delta)|\le\tfrac 1 2$ and
$$
Q = [k+\tfrac {1-\delta} 2,k+\tfrac {1+\delta} 2] \times [-\tfrac\delta 2,\tfrac\delta 2] .
$$
Then
\begin{align*}
(Zf)_Q
&= \left(\frac 1\delta\int_{k+(1-\delta)/2}^{k+(1+\delta)/2}\sin(\pi(x-k))\,dx\right)\left(\frac 1\delta\int_{-\delta/2}^{\delta/2}e^{2\pi ik\omega}\,d\omega\right)=\sinc(\delta/2)\sinc(k\delta)
\end{align*}
and thus
\begin{align*}
M_Q(Zf)
&= \frac 1 {\delta^2}\int_Q\left|\sin(\pi(x-k))e^{2\pi ik\omega}-\sinc(\delta/2)\sinc(k\delta)\right|\,d(x,\omega)\\
&\ge \frac 1 {\delta^2}\int_Q\big[\sin(\pi(x-k)) - \sinc(\delta/2)|\sinc(k\delta)|\big]\,d(x,\omega)\\
&= \sinc(\delta/2)(1-|\sinc(k\delta)|)\,\ge\,\sinc(1/2)(1-|\sinc(k\delta)|)\,\ge\,\frac 1\pi,
\end{align*}
which shows that $Zf\notin\VMO(\R^2)$.
\end{rem}

\section[\hspace*{1cm}Proof of Theorems \ref{t:main-rational_density} and \ref{t:main-rational}]{Proof of Theorems \ref{t:main-rational_density} and \ref{t:main-rational}}\label{s:proof}
The core of the proof of both theorems is the following proposition, which is simply Theorem \ref{t:main-rational} for lattices of the form $\tfrac 1Q\Z\times P\Z$. In the proofs of the two theorems below we shall extend Proposition \ref{p:separable} to more general lattices by means of so-called metaplectic operators.

\begin{prop}\label{p:separable}
Let $g\in L^2(\R)$ and let $\Lambda = \tfrac 1 Q\Z\times P\Z$ with $P,Q\in\N$ coprime, such that the Gabor system $\{e^{2\pi ibx}g(x-a) : (a,b)\in\Lambda\}$ is a Riesz basis of its closed linear span $\calG(g,\Lambda)$. If $e^{2\pi i\eta x}g(x-u)\in \calG(g,\Lambda)$ for some $(u,\eta)\in\R^2\backslash\Lambda$, then $Zg\notin\VMO_{\rm loc}(\R^2)$.
\end{prop}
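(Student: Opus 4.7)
The plan is to transport the assumption $\pi(u,\eta)g\in\calG(g,\Lambda)$ to the Zak side, propagate the $\VMOli$-regularity through the matrix structure of Lemma \ref{l:riesz}, and then apply Proposition \ref{p:THEPROP} after reducing to the case $(u,\eta)\in\Q^2$. First, I would use the Riesz basis hypothesis to obtain unique $(c_{m,n})\in\ell^2(\Z^2)$ with $\pi(u,\eta)g=\sum_{m,n}c_{m,n}\pi(m/Q,nP)g$; applying the Zak transform together with Lemma \ref{l:zak}(b) and the quasi-periodicity of $Zg$, exactly as in the proof of Lemma \ref{l:riesz}, yields the matrix identity $\mathbf{A}(x,\omega)F(x,\omega)=\mathbf{v}(x,\omega)$, where $\mathbf{A}$ is the $P\times Q$ matrix from Lemma \ref{l:riesz}, $F=(F_0,\dots,F_{Q-1})^T$ with $F_\ell(x,\omega)=\sum_{s,n\in\Z}c_{sQ+\ell,n}e^{2\pi i(nPx-s\omega)}$ (so $F_\ell$ is $\tfrac{1}{P}$-periodic in $x$ and $1$-periodic in $\omega$), and $\mathbf{v}_k(x,\omega)=e^{-2\pi i\eta k/P}e^{2\pi i\eta x}\,Zg(x-k/P-u,\omega-\eta)$ for $k=0,\dots,P-1$.

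Next, assuming for contradiction that $Zg\in\VMO_{\rm loc}(\R^2)$, Lemma \ref{l:riesz} already gives $Zg\in L^\infty(\R^2)$, so actually $Zg\in\VMOli(\R^2)$. Lemma \ref{l:affine} and Corollary \ref{c:puh} show that every entry of $\mathbf{A}$ and of $\mathbf{v}$ lies in $\VMOli(\R^2)$. The lower Riesz bound $\mathbf{A}^*\mathbf{A}\ge PA\cdot I$ combined with Corollary \ref{c:matrix} then gives $(\mathbf{A}^*\mathbf{A})^{-1}\mathbf{A}^*$ with $\VMOli$-entries, so $F=(\mathbf{A}^*\mathbf{A})^{-1}\mathbf{A}^*\mathbf{v}\in(\VMOli(\R^2))^Q$; since each $F_\ell$ is genuinely periodic, it actually lies in $\VMO(\R^2)\cap L^\infty(\R^2)$.

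A covariance computation using Lemma \ref{l:pi} shows that $\calG(g,\Lambda)$ is $\pi(u,\eta)$-invariant, so $\pi(u,\eta)^Ng=e^{-2\pi i\binom{N}{2}u\eta}\pi(Nu,N\eta)g\in\calG(g,\Lambda)$ for every $N\in\N$. If $(u,\eta)\notin\Q^2$, then the orbit $\{(Nu,N\eta)\bmod\Lambda:N\in\N\}$ has dense closure in $\R^2/\Lambda$ (Kronecker equidistribution), so Lemma \ref{l:pi}(b) and closedness of $\calG(g,\Lambda)$ imply $\pi(v,w)g\in\calG(g,\Lambda)$ for all $(v,w)\in\R^2$. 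Since modulates and translates of a nonzero $g\in L^2(\R)$ span $L^2(\R)$, this forces $\calG(g,\Lambda)=L^2(\R)$ and $(g,\Lambda)$ to be a Riesz basis of $L^2(\R)$ at critical density $1$, hence $P=Q$. The metaplectic dilation $D_{1/P}$, together with Proposition \ref{prop:dilations-chirps}, then reduces this to $(D_{1/P}g,\Z\times\Z)$ being a Riesz basis of $L^2(\R)$, and Theorem \ref{t:gautam-ext} yields $Z(D_{1/P}g)\notin\VMO_{\rm loc}(\R^2)$, hence $Zg\notin\VMO_{\rm loc}(\R^2)$, contradicting the standing assumption.

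It remains to handle $(u,\eta)\in\Q^2\setminus\Lambda$. I pick the minimal $N\in\N$ with $N(u,\eta)\in\Lambda$ and write $Nu=s_a+\ell_a/Q$, $N\eta=b\in P\Z$. Since $\pi(u,\eta)^Ng$ is a single basis element up to a unimodular scalar, uniqueness of the Riesz expansion forces the coefficient vector $F^{(N)}$ for $\pi(u,\eta)^Ng$ to equal $F^{(N)}_M(x,\omega)=e^{-2\pi i\binom{N}{2}u\eta}e^{2\pi i(bx-s_a\omega)}\delta_{M,\ell_a}$, while iterating the first-paragraph identity expresses $F^{(N)}_M$ as a sum over tuples $(\ell_1,\dots,\ell_N)$ with $\sum\ell_i\equiv M\bmod Q$ of products of shifted $F_{\ell_i}$'s and explicit phases. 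When $Q=1$ this sum collapses to the telescoping identity $\prod_{n=0}^{N-1}F_0(x+nu,\omega+n\eta)=C\cdot e^{2\pi i(N\eta x-Nu\,\omega)}$; Proposition \ref{p:THEPROP}, applied with $P_1=P$, $P_2=1$ and $N$ replaced by $NQ$ so that $NQu,NQ\eta\in\Z$, then yields divisibilities forcing $u\in\Z$ and $\eta\in P\Z$, i.e.\ $(u,\eta)\in\Lambda$, a contradiction. \emph{The main obstacle is the case $Q>1$}: here the iterated identities form a system of sums of products rather than a single telescoping product, and I must distill a single-product relation of the form demanded by Proposition \ref{p:THEPROP}. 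My intended remedy is to recast the iteration as $\vec F^{(N+1)}(x,\omega)=\mathbf{S}(x,\omega)^T\vec F^{(N)}(x-u,\omega-\eta)$ for a $Q\times Q$ transition matrix $\mathbf{S}[F]$ built from the $F_\ell$'s and their phases, so that iteration yields a matrix product; taking $H=(\det\mathbf{S})^{1/Q}$ on a patch admitting a single-valued $Q$-th-root branch should then convert the matrix product into a scalar product suitable for Proposition \ref{p:THEPROP} (with $P_1=P$ and an appropriate $P_2$), and the resulting divisibilities again force $(u,\eta)\in\Lambda$, completing the contradiction.
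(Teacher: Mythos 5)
Your first two paragraphs do match the paper: the Zak-side identity $\bA(x,\omega)F(x,\omega)=e^{2\pi i\eta x}D_P\bA(x-u,\omega-\eta)e_0$ and the propagation of $\VMOli$-regularity to $F$ via Lemma \ref{l:riesz}, Lemma \ref{l:affine} and Corollary \ref{c:matrix} are exactly the paper's steps. The problems are in the two remaining steps. First, the reduction to rational $(u,\eta)$: you claim that $(u,\eta)\notin\Q^2$ makes the orbit $\{N(u,\eta)\bmod\Lambda : N\in\N\}$ dense in $\R^2/\Lambda$. This is false in general: if, say, $u\notin\Q$ but $\eta\in\Q$ (e.g.\ $\eta=0$), the closure of the orbit is a proper closed subgroup (a subtorus times a finite set), so you cannot conclude $\pi(v,w)g\in\calG(g,\Lambda)$ for \emph{all} $(v,w)$, nor that $\calG(g,\Lambda)=L^2(\R)$ and $P=Q$. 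The paper's Lemma \ref{l:rational} treats precisely these mixed cases by one-dimensional density arguments inside the closed invariance set $M$, and its conclusion is the weaker but sufficient one: there is a \emph{rational} point in $M\setminus\Lambda$. Your route needs that lemma (or an equally careful case analysis), not Kronecker density.

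Second, and more seriously, the case $Q>1$ — which you yourself flag as "the main obstacle" — is the substantive content of the proof, and your sketched remedy does not fill it. Your iteration acts on the coefficient vectors $\vec F^{(N)}$ and terminates in a \emph{vector} identity (the delta coefficient of a single basis element); no determinant can be taken there. Passing to a $Q\times Q$ transition matrix $\mathbf S$ and choosing a branch of $(\det\mathbf S)^{1/Q}$ "on a patch" does not produce what Proposition \ref{p:THEPROP} needs: an a.e.\ defined, globally periodic function in $\VMOli(\R^2)$ satisfying the product identity on all of $\R^2$. Elements of $\VMOli$ need not be continuous, so patchwise root branches are not available, and in fact no root is needed. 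The paper's solution is to upgrade the single-column relation to the full matrix identity \eqref{e:fertig}, $\bA(x-u,\omega-\eta)=e^{-2\pi i\eta x}D_P^{-1}\bA(x,\omega)\bM(x,\omega)$, using $\bA(x-\tfrac{\ell}{Q},\omega)=\bA(x,\omega)\bR(\omega)^\ell$, to iterate it, to cancel $\bA$ by its a.e.\ left-invertibility (Lemma \ref{l:riesz}), and only then to take $H=\det\bM$. Crucially, the commutation relation \eqref{e:puh} shows that $H$ is additionally $\tfrac1Q$-periodic in $x$, and it is exactly this extra periodicity that makes the divisibility conclusions of Proposition \ref{p:THEPROP} close to $u\in\tfrac1Q\Z$ and $\eta\in P\Z$; your sketch contains no mechanism producing it. As it stands, the proposal proves the proposition only for $Q=1$ (and only for $(u,\eta)\in\Q^2$), which falls short of the statement.
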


Before we prove Proposition \ref{p:separable}, we state a lemma that allows us to replace $(u,\eta)\in \R^2 \backslash \Lambda$ with a rational pair $(u,\eta)\in\Q^2\backslash\Lambda$.

\begin{lem}\label{l:rational}
For $g\in L^2(\R)$ and $\Lambda = \frac{1}{Q}\Z\times P\Z$ with $P,Q\in\N$, define the set
$$
M = \left\{(\alpha,\beta)\in\R^2 : \pi(\alpha,\beta)g\in\calG(g,\Lambda)\right\}
\quad (\supset \Lambda) .
$$
If $M\backslash\Lambda\neq\emptyset$, then there exists $(u,\eta)\in (M\cap\Q^2)\backslash\Lambda$.
\end{lem}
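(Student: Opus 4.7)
The plan is to show that $M$ is in fact a closed subgroup of $\R^2$ that properly contains $\Lambda$; the classification of closed subgroups of $\R^2$, combined with the rationality $\Lambda\subset\Q^2$, then produces the desired rational coset representative. To begin, $M$ is closed because $\calG(g,\Lambda)$ is closed in $L^2(\R)$ and $(\alpha,\beta)\mapsto\pi(\alpha,\beta)g$ is continuous by Lemma~\ref{l:pi}(b). Next, $M+\Lambda=M$: if $(u,\eta)\in M$ and $(a,b)\in\Lambda$, then Lemma~\ref{l:pi}(a) shows that $\pi(a+u,b+\eta)g$ is a scalar multiple of $\pi(a,b)\pi(u,\eta)g$, which belongs to $\calG(g,\Lambda)$ by its $\Lambda$-invariance. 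Furthermore, $M$ is closed under addition: given $(u_i,\eta_i)\in M$ for $i=1,2$, the commutation relation in Lemma~\ref{l:pi}(a) together with the $\Lambda$-invariance of $\calG(g,\Lambda)$ implies that $\pi(u_1,\eta_1)$ sends every generator $\pi(a,b)g$ into $\calG(g,\Lambda)$, hence preserves $\calG(g,\Lambda)$ by linearity and boundedness. Applying this to $\pi(u_2,\eta_2)g\in\calG(g,\Lambda)$ and rewriting via Lemma~\ref{l:pi}(a) yields $(u_1+u_2,\eta_1+\eta_2)\in M$.

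The main obstacle is to promote this sub-semigroup structure to a group, and this is where the compactness of the torus $T:=\R^2/\Lambda$ enters decisively. The image of $M$ in $T$ is a closed sub-semigroup containing the identity. A standard compactness argument shows that any closed sub-semigroup of a compact Hausdorff abelian group containing the identity is automatically a subgroup: for any $v$ in the sub-semigroup, extract from $\{nv\}_{n\in\N}$ a convergent subsequence $n_kv\to t$ in $T$; then $(n_{k+1}-n_k)v\to 0$ along differences in $\N$, so one can choose an increasing sequence $m_l\in\N$ with $m_lv\to 0$, giving $(m_l-1)v\to -v$ and placing $-v$ in the sub-semigroup. Consequently $M$ itself is a closed subgroup of $\R^2$, and the hypothesis $M\setminus\Lambda\neq\emptyset$ forces this containment to be strict.

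Finally, I invoke the classification of closed subgroups of $\R^2$: up to isomorphism $M\cong\R^a\times\Z^b$ with $a+b\leq 2$. Since $\Lambda\subset M$ has rank two, only three possibilities remain: (i) $M$ is discrete with finite index $n:=[M:\Lambda]$; (ii) $M=V+\Z w$ where $V$ is a line intersecting $\Lambda$ in a rank-one subgroup $\Z\mu$ with $\mu\in\Lambda\setminus\{0\}$; or (iii) $M=\R^2$. In case (i), $nm\in\Lambda\subset\Q^2$ for every $m\in M$ forces $M\subset\Q^2$, and any element of $M\setminus\Lambda$ serves as $(u,\eta)$. In case (ii), the point $\tfrac12\mu\in V\subset M$ lies in $\Q^2\setminus\Lambda$, since $\Lambda\cap V=\Z\mu$. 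Case (iii) is immediate.
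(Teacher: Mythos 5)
Your argument is correct, but it takes a genuinely different route from the paper's. The paper fixes $(u,\eta)\in M\setminus\Lambda$, uses only that $M$ is closed and contains $\Lambda+\Z(u,\eta)$, and then runs a hands-on density argument tailored to the separable lattice $\tfrac 1Q\Z\times P\Z$: if $u$ or $\eta$ is irrational, density of irrational rotations produces points of $\Lambda+\Z(u,\eta)$ accumulating at a point whose first coordinate is $\tfrac 1{2Q}$ (respectively whose second coordinate is $\tfrac 12$), and closedness of $M$ then yields a point of $M\setminus\Lambda$ with rational coordinates after at most two passes. You instead determine the algebraic structure of $M$: it is closed, satisfies $M+\Lambda=M$, is closed under addition, the compactness of $\R^2/\Lambda$ upgrades this closed sub-semigroup to a closed subgroup, and the classification of closed subgroups of $\R^2$ together with $\Lambda\subset\Q^2$ produces a rational point of $M\setminus\Lambda$ in each of the three possible cases. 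Your route buys generality and transparency: it works verbatim for any full-rank lattice contained in $\Q^2$, not just separable ones, and it honestly establishes that $-(u,\eta)+\Lambda\subset M$ --- a fact the paper's set $\Lambda_1$ (with $k\in\Z$) uses tacitly and which is not immediate, since one only gets the inclusion $\pi(u,\eta)\calG(g,\Lambda)\subset\calG(g,\Lambda)$ rather than equality (the paper's density argument can be rearranged to use only $k\ge 1$, so no harm is done there). The paper's route is more elementary, requiring no structure theorem. Two small points you should make explicit: the image of $M$ in $\R^2/\Lambda$ is closed precisely because $M$ is closed and saturated ($M+\Lambda=M$); and in your case (ii) the claim that $V\cap\Lambda$ has rank one needs the (easy) observation that otherwise the projection along $V$ would embed $\Lambda\cong\Z^2$ into $\Z w\cong\Z$.
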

\begin{proof}
Note that, by Lemma \ref{l:pi}, $M$ is a closed set containing the lattice $\Lambda = \frac{1}{Q}\Z\times P\Z$. Assume that $(u,\eta)\in M\backslash\Lambda$ and define the set
$$
\Lambda_1 := \left\{(\tfrac m Q,nP) + k(u,\eta) : m,n,k\in\Z\right\}\,\subset\, M.
$$
Assume that $u\notin\Q$, $\eta\in\Q$, $\eta = \tfrac c d$, $c,d\in\Z$, $d\neq 0$, and put $v := dPu\notin\Q$. Let $x:=\tfrac 1{2Q}$. Since $\{\tfrac m Q + \ell v : m,\ell \in\Z\}$ is dense in $\R$, we can choose $m,\ell \in\Z$ such that $x' := u + \tfrac m Q + \ell v$ is arbitrary close to $x$. Then
$$
(x',\eta) = \left(\tfrac m Q,-\ell cP\right) + (1+\ell dP)(u,\eta)\,\in\,\Lambda_1.
$$
Hence, $(\tfrac 1{2Q},\eta)\in\ol{\Lambda_1}\backslash\Lambda\subset M\backslash\Lambda$.

Assume now that $\eta\notin\Q$. Since $\{nP + k\eta : n,k\in\Z\}$ is dense in $\R$, there exist sequences $(n_j)_{j\in\N}$ and $(k_j)_{j\in\N}$ in $\Z$ such that $n_jP + k_j\eta\to\tfrac 1{2}$ as $j\to\infty$. For each $j\in\N$ pick $m_j\in\Z$ such that $\tfrac{m_j}Q + k_ju\in [0,\tfrac 1Q]$. Then this sequence is bounded and thus has a convergent subsequence. By $x$ denote its limit. Then $(x,\tfrac 12)\in M\backslash\Lambda$. If $x\in\Q$, we have reached our aim. Otherwise the above reasoning applies again.
\end{proof}

\begin{proof}[Proof of Proposition \rmref{p:separable}]
In the sequel we aim to deduce a contradiction from the three following assumptions (where $\Lambda = \tfrac 1Q\Z\times P\Z$ with $P,Q\in\N$):
\begin{enumerate}
\item[(i)]   $Zg\in\VMO_{\rm loc}(\R^2)$.
\item[(ii)]  The Gabor system $(g,\Lambda)$ is a Riesz basis of $\calG(g,\Lambda)$.
\item[(iii)] $\pi(u,\eta)g\in\calG(g,\Lambda)$ for some $(u,\eta)\in\R^2\backslash\Lambda$.
\end{enumerate}
Due to Lemma \ref{l:rational} we may assume in (iii) that $(u,\eta)\in\Q^2\backslash\Lambda$. Our strategy is as follows: First, from (ii) and (iii) we obtain an equation of the form
$$
\bA(x,\omega) = e^{-2\pi i\eta(x+u)}D_P^{-1}\bA(x+u,\omega+\eta)\bM(x+u,\omega+\eta),
$$
which holds for a.e.\ $(x,\omega)\in\R^2$. Here, $\bA$ is the matrix function from Lemma \ref{l:riesz}, $D_P$ is a constant diagonal scaling matrix and $\bM$ is a matrix function satisfying certain periodicity properties. We then iterate this equation by successively replacing $(x,\omega)$ by $(x+u,\omega+\eta)$. As $\bA$ is quasi-periodic and $u,\eta\in\Q$, this process ends at a certain point with $\bA(x,\omega)$ on both sides and we end up with an equation
$$
\prod_{n=1}^N\bM(x+nu,\omega+n\eta) = e^{2\pi i\left(M_1x + M_2\omega\right)}I_Q.
$$
By applying the determinant on both sides, we are in the situation of Proposition \ref{p:THEPROP}, which finally implies that $(u,\eta)\in\Lambda$.

So, let us assume that (i)--(iii) are satisfied. Since the system $(g,\Lambda) = \{ \pi(\tfrac m Q,nP)g : m,n\in\Z \}$ is a Riesz basis of $\calG(g,\Lambda)$ by (ii) and $\pi(u,\eta)g\in \calG(g,\Lambda)$, there exists $(c_{m,n})_{m,n\in\Z}\in\ell^2(\Z^2)$ such that
$$
\pi(u,\eta)g = \sum_{m,n\in\Z}c_{m,n}\pi(\tfrac m Q,nP)g = \sum_{\ell=0}^{Q-1}\sum_{s,n\in\Z}c_{sQ+\ell,n}\,\pi(s+\tfrac{\ell}Q,nP)g,
$$
which converges in $L^2(\R)$. Denoting $G := Zg$, an application of the Zak transform gives (see Lemma \ref{l:zak})
\begin{align}\label{e:important}
e^{2\pi i\eta x} \, G(x-u,\omega-\eta)
&= \sum_{\ell=0}^{Q-1}\sum_{s,n\in\Z}c_{sQ+\ell,n}e^{2\pi i(nPx-s\omega)}G(x-\tfrac\ell Q,\omega) \nonumber \\
&= \sum_{\ell=0}^{Q-1}F_\ell(x,\omega) \, G(x-\tfrac\ell Q,\omega)\qquad\text{for a.e.\ $(x,\omega)\in\R^2$},
\end{align}
where $F_\ell(x,\omega) := \sum_{s,n\in\Z}c_{sQ+\ell,n}e^{2\pi i(nPx-s\omega)}$.
By definition, each $F_\ell$ is $1$-periodic in $\omega$ (i.e., $F_\ell(x,\omega+1) = F_\ell(x,\omega)$ for a.e.\ $(x,\omega)\in\R^2$) and $\frac{1}{P}$-periodic in $x$. Replacing $x$ by $x-\tfrac k P$ in \eqref{e:important}, $k=0,\ldots,P-1$, yields
$$
e^{2\pi i\eta(x-\tfrac k P)}G(x - u - \tfrac k P,\omega-\eta) = (\bA(x,\omega)F(x,\omega))_k,
$$
where $\bA$ is the matrix function from Lemma \ref{l:riesz} and $F := (F_0,\ldots,F_{Q-1})^T$. Thus, we have
\begin{equation}\label{e:also_important}
\bA(x,\omega)F(x,\omega) = e^{2\pi i\eta x}D_P\bA(x-u,\omega-\eta)e_0\qquad\text{for a.e.\ $(x,\omega)\in\R^2$},
\end{equation}
where $D_P = \diag(\exp(-2\pi i\eta\tfrac k P))_{k=0}^{P-1}$ and $e_j$ is the $(j+1)$-th standard basis vector of $\C^Q$, $j=0,\ldots,Q-1$. Note that each entry of $\bA$ is a function in $\VMOli(\R^2)$ by (i), Lemma \ref{l:affine}, and Lemma \ref{l:riesz}. The identity in \eqref{e:also_important} implies (cf.\ Lemma \ref{l:riesz})
$$
F(x,\omega) = e^{2\pi i\eta x}\left(\bA(x,\omega)^*\bA(x,\omega)\right)^{-1}\bA(x,\omega)^*D_P\bA(x-u,\omega-\eta)e_0.
$$
Hence, from the periodicity of $F_\ell$ and Corollary \ref{c:matrix} we infer that $F_\ell\in\VMO(\R^2)\cap L^\infty(\R^2)$ for $\ell = 0,\ldots,Q-1$. From \eqref{e:also_important} we also obtain
$$
\bA(x-u,\omega-\eta)e_0 = e^{-2\pi i\eta x}D_P^{-1}\bA(x,\omega)F(x,\omega).
$$
Note that $\bA(x,\omega)e_\ell = \bA(x-\tfrac\ell Q,\omega)e_0$ and $\bA(x-\tfrac\ell Q,\omega) = \bA(x,\omega)\bR(\omega)^\ell$, where $\bR(\omega)\in\C^{Q\times Q}$ is the matrix
$$
\begin{pmatrix}
0 & & & e_w\\
1 & \ddots & & \\
 & \ddots & \ddots & \\
 & & 1 & 0
\end{pmatrix},
$$
with $e_\omega := e^{-2\pi i\omega}$. Therefore,
\begin{align*}
\bA(x-u,\omega-\eta)e_\ell
&= e^{-2\pi i\eta(x-\frac\ell Q)}D_P^{-1}\bA(x-\tfrac\ell Q,\omega)F(x-\tfrac\ell Q,\omega)\\
&= e^{-2\pi i\eta(x-\frac\ell Q)}D_P^{-1}\bA(x,\omega)\bR(\omega)^\ell F(x-\tfrac\ell Q,\omega).
\end{align*}
Hence, if $\bM(x,\omega)\in\C^{Q\times Q}$ denotes the matrix with columns $e^{2\pi i\eta\frac\ell Q}\bR(\omega)^\ell F(x-\tfrac\ell Q,\omega)$, $\ell = 0,\ldots,Q-1$, we obtain
\begin{equation}\label{e:fertig}
\bA(x-u,\omega-\eta) = e^{-2\pi i\eta x}D_P^{-1}\bA(x,\omega)\bM(x,\omega).
\end{equation}
Note that $\bM$ has the same periodicity in $x$ and $\omega$ as $F$. Moreover, an easy calculation leads to
\begin{equation}\label{e:puh}
\bM(x-\tfrac 1 Q,\omega) = e^{-\frac{2\pi i}Q}\bR(\omega)^{-1}\bM(x,\omega)\bR(\omega).
\end{equation}
Now, let us iterate the relation \eqref{e:fertig}:
\begin{align*}
\bA(x,\omega)
&= e^{-2\pi i\eta(x+u)}D_P^{-1}\bA(x+u,\omega+\eta)\bM(x+u,\omega+\eta)\\
&= e^{-2\pi i\eta(2x+3u)}D_P^{-2}\bA(x+2u,\omega+2\eta)\bM(x+2u,\omega+2\eta)\bM(x+u,\omega+\eta)\\
&= \ldots\\
&= e^{2\pi i\eta\left(Nx + \frac{N(N-1)}2u\right)}D_P^{-N}\bA(x+Nu,\omega+N\eta)\prod_{n=1}^N\bM(x+nu,\omega+n\eta),
\end{align*}
where the matrix product is to be read in terms of left multiplication. As $(u,\eta)\in\Q^2$, we may choose $N$ such that $M_1 := -N\eta\in\Z$, $M_2 := -Nu\in\Z$, $Nu\eta\in 2\Z$, and $M_1/P = -N\eta/P\in\Z$. Then $D_P^{-N} = I_P$ and Lemma \ref{l:zak} yields $\bA(x+Nu,\omega+N\eta) = \bA(x-M_2,\omega-M_1) = e^{-2\pi iM_2\omega}\bA(x,\omega)$, hence
$$
\bA(x,\omega) = e^{2\pi i\left(-M_1x - M_2\omega\right)}\bA(x,\omega)\prod_{n=1}^N\bM(x+nu,\omega+n\eta).
$$
Since $\bA(x,\omega)$ has a left inverse for a.e.\ $(x,\omega)\in\R^2$ by Lemma \ref{l:riesz}, we get
$$
\prod_{n=1}^N\bM(x+nu,\omega+n\eta) = e^{2\pi i\left(M_1x + M_2\omega\right)}I_Q\qquad\text{for a.e.\ $(x,\omega)\in\R^2$}.
$$
Finally, we define the function $H := \det\bM$. Since each entry of $\bM$ is contained in $\VMOli(\R^2)$, we have $H\in\VMOli(\R^2)$. In addition, $H$ is $\frac{1}{P}$-periodic in $x$ and $1$-periodic in $\omega$. But by \eqref{e:puh}, $H$ is also $\frac{1}{Q}$-periodic in $x$. Since it satisfies
$$
\prod_{n=0}^{N-1} H(x+nu,\omega+n\eta) = e^{2\pi i\left(QM_1x + QM_2\omega\right)}\qquad\text{for a.e.\ $(x,\omega)\in\R^2$},
$$
Proposition \ref{p:THEPROP} implies that both $NP$ and $NQ$ divide $QM_1 = -NQ\eta$, and $N$ divides $QM_2 = -QNu$. The last relation gives $u\in\tfrac 1 Q\Z$. From the first two relations it follows that $P$ divides $Q\eta$ and that $\eta\in\Z$. But as $P$ and $Q$ are coprime, $P$ divides $\eta$, i.e., $\eta\in P\Z$. This is the desired contradiction.
\end{proof}

In order to extend Proposition \ref{p:separable} to arbitrary rational lattices and lattices of rational density we make use of the so-called {\em metaplectic operators}. To describe this class of operators we first mention that any matrix in $\operatorname{SL}(2,\R)$ can be expressed as a finite product of matrices of the form
\begin{equation}\label{e:generators}
\mat 0 1 {-1} 0,\quad \mat \alpha 0 0 {\alpha^{-1}},\quad\mat 1 0 \beta 1,
\qquad \alpha,\beta\in \R\backslash\{0\} .
\end{equation}
Indeed, if $S = \mat a b c d$ with $ad-bc = 1$, then
$$
S = \mat 1 0 {ca^{-1}} 1 \mat 0 1 {-1} 0\mat 1 0 {-ab} 1\mat 0 1 {-1} 0\mat {-a} 0 0 {-a^{-1}}
\quad \text{if} \;\; a \neq 0
$$
and
$$
S = \mat 1 {0} {-cd} 1\mat 0 1 {-1} 0\mat {b^{-1}} 0 0 b
\quad \text{if} \;\; a = 0 .
$$
This in particular shows that if $S\in\operatorname{SL}(2,\Q)$, then the parameters $\alpha,\beta$ can be chosen to be rational.

It is known \cite{gr} that to each matrix $S\in\operatorname{SL}(2,\R)$ there corresponds a (so-called metaplectic) unitary operator $U_S : L^2(\R)\to L^2(\R)$ such that
$$
U_S\pi(x,\omega)U_S^* = \pi(S(x,\omega)^T)
\quad \text{for all} \;\; (x,\omega)\in\R^2.
$$
The operator $U_S$ is unique up to scalar multiplication with unimodular constants. If $S,T\in\operatorname{SL}(2,\R)$, the operator $U_SU_T$ is obviously a metaplectic operator corresponding to $ST$. That is, we have $U_{ST} = U_SU_T$. As is easily seen, the three types of matrices in \eqref{e:generators}, which generate $\operatorname{SL}(2,\R)$, correspond to the metaplectic operators $\calF$ (Fourier transform), $D_\alpha$, and $C_\beta$ (defined in Proposition \ref{prop:dilations-chirps}), respectively.

\begin{proof}[Proof of Theorem \rmref{t:main-rational}]
As in the proof of Proposition \ref{p:separable}, in addition to the assumptions of Theorem \ref{t:main-rational}, we shall assume that $Zg\in\VMO_{\rm loc}(\R^2)$ and derive a contradiction. We have $\Lambda = A\Z^2$, where $A\in\operatorname{GL}(2,\Q)$. We may write $\det A = \tfrac P Q$, where $P,Q\in\Z\setminus\{0\}$ are coprime numbers. Then $B := \diag(\tfrac 1 Q,P)A^{-1}\in\operatorname{SL}(2,\Q)$ can be expressed as a finite product of matrices (\ref{e:generators}) with $\alpha,\beta\in\Q\backslash\{0\}$. Hence, the metaplectic operator $U_B$ can be written as a finite product of operators of the type $\calF$, $D_\alpha$, and $C_\beta$. Now, set
\begin{equation}\label{e:new_objects}
(u_1,\eta_1)^T := B(u,\eta)^T,\qquad g_1 := U_B g \in L^2(\R),\qquad\text{and}\qquad\Lambda_1 = B\Lambda = \tfrac{1}{Q}\Z\times P\Z.
\end{equation}
Proposition \ref{prop:dilations-chirps} implies that $Zg_1\in\VMO_{\rm loc}(\R^2)$. Also, $\pi(B\la)g_1 = U_B\pi(\la)U_B^{-1} g_1 = U_B\pi(\la)g$ for every $\la\in\Lambda$, which implies that $(g_1,\Lambda_1)$ is a Riesz basis for its closed linear span $\calG(g_1,\Lambda_1) = U_B\calG(g,\Lambda)$. Moreover, the condition $\pi(u,\eta)g\in \calG(g,\Lambda)$ immediately translates to
$$
\pi(u_1,\eta_1)g_1 = \pi(B(u,\eta)^T)U_Bg = U_B\pi(u,\eta)g
\,\in\,U_B\calG(g,\Lambda) = \calG(g_1,\Lambda_1).
$$
But as $(u_1,\eta_1)\notin\Lambda_1$ this is not possible by Proposition \ref{p:separable}.
\end{proof}

\begin{proof}[Proof of Theorem \rmref{t:main-rational_density}]
Let us first discuss the condition \eqref{eqn:uncertainty-product-pq}. Since $g\in L^2(\R)$, the condition holds for some $\alpha,\beta\in\R$ if and only if it holds for all $\alpha,\beta\in\R$. At the same time, \eqref{eqn:uncertainty-product-pq} exactly means that $g\notin H^{p/2}(\R)$ or $g\notin H^{q/2}(\R)$. Towards a contradiction, in addition to the assumptions of Theorem \ref{t:main-rational_density}, we assume that there exist some $p,q\in (1,\infty)$ with $\tfrac 1 p+\tfrac 1 q = 1$ such that the product on the left hand side of \eqref{eqn:uncertainty-product-pq} (with, e.g., $\alpha = \beta = 0$) is finite. In other words, we assume that $g\in H^{p/2}(\R)$ and $\hat g\in H^{q/2}(\R)$.

Let $\Lambda$ be an arbitrary lattice in $\R^2$ with rational density $P/Q$, where $P$ and $Q$ are coprime integers, and let $g\in L^2(\R)$ be as in Theorem \ref{t:main-rational_density}. Also, let $(u,\eta)\in\R^2\backslash\Lambda$ such that $\pi(u,\eta)g\in \calG(g,\Lambda)$. As in the proof of Theorem \rmref{t:main-rational}, we choose a matrix $B\in\operatorname{SL}(2,\R)$ (here, $B$ is allowed to have irrational entries) such that $B\Lambda = \Lambda_1 := \frac{1}{Q}\Z\times P\Z$. Define $(u_1,\eta_1)$, $g_1$, and $\Lambda_1$ as in \eqref{e:new_objects}. Then $g_1\in L^2(\R)$, $(g_1,\Lambda_1)$ is a Riesz basis for its closed linear span, and $\pi(u_1,\eta_1)g_1\in\calG(g_1,\Lambda_1)$. Hence, by Proposition \ref{p:separable} it suffices to show that $Zg_1\in\VMO_{\rm loc}(\R^2)$.

To simplify notations, for $p\in (1,\infty)$ let $\H^{p}(\R)$ be the space of functions $f\in H^{p/2}(\R)$ whose Fourier transform $\hat f$ is contained in $H^{q/2}(\R)$, where $\tfrac 1 p + \tfrac 1 q = 1$. Also, let $\H := \bigcup_{p\in (1,\infty)}\H^p(\R)$. In what follows, we prove that $U\H\subset\H$ for any metaplectic operator $U$. It then follows that $g_1 = U_Bg\in\H$. And since, by \cite{g}, we have $Z\H\subset\VMO_{\rm loc}(\R^2)$, we obtain $Zg_1\in\VMO_{\rm loc}(\R^2)$, which was our aim.

Since every metaplectic operator is a finite product of the Fourier transform $\calF$, dilations, and chirp muliplication, we only have to prove that $\calF\H\subset\H$, $D_\alpha\H\subset\H$ for each $\alpha\in\R\setminus\{0\}$, and $C_1\H\subset\H$ (cf.\ \eqref{e:easy_chirp}). Using the representation $H^s(\R) = \{f\in L^2(\R) : \omega^{2s}\wh f\,^2\in L^1(\R)\}$, the first two claims are almost immediate. We will now prove that $C_1\H^p(\R)\subset\H^p(\R)$ for $p\in (1,\infty)$. So, let $g\in\H^p(\R)$. Then $\hat g\in H^{q/2}(\R)$ implies $x^q g^2\in L^1(\R)$ and hence $\wh{C_1g}\in H^{q/2}(\R)$. In order to show that $C_1g\in H^{p/2}(\R)$, we make use of the following representation of fractional Sobolev spaces (see, e.g., \cite{npv}):
$$
H^s(\R) = \left\{f\in L^2(\R) : \frac{f(x)-f(y)}{(x-y)^{\tfrac 1 2 + s}}\in L^2(\R^2)\right\}
$$
for $s\in (0,1)$ and
$$
H^s(\R) = \left\{f\in H^m(\R) : f^{(m)}\in H^\sigma(\R)\right\}
$$
for $s = m+\sigma$ with $m\in\N$ and $\sigma\in [0,1)$. Let $c(x) := e^{2\pi ix^2}$, $x\in\R$. Then $C_1g(x)=c(x)g(x)$ and it is clear that $C_1g\in H^{p/2}(\R)$ if $p/2\in\N$. We only prove the claim here for $1 < p < 2$. The rest is then straightforward. We have (setting $s=p/2$)
$$
\frac{C_1g(x)-C_1g(y)}{(x-y)^{\frac{p+1}2}} = c(x)\frac{g(x)-g(y)}{(x-y)^{\frac{p+1}2}} + \frac{c(x)-c(y)}{(x-y)^{\frac{p+1}2}}g(y).
$$
Hence, as $g\in H^{p/2}(\R)$, it is left to show that the second summand is in $L^2(\R^2)$. To this end, fix $y\in\R$ and observe that
$$
\int_{|x-y|>1}\frac{|c(x)-c(y)|^2}{|x-y|^{p+1}}\,dx\,\le\,c_1
$$
and
$$
\int_{|x-y|\le 1}\frac{|c(x)-c(y)|^2}{|x-y|^{p+1}}\,dx\,\le\,c_2(1+y^2),
$$
where $c_1,c_2 > 0$ only depend on $p$. Hence,
$$
\int_{\R}\frac{|c(x)-c(y)|^2}{|x-y|^{p+1}}\,dx\,\le\,c(1+y^2)
$$
for each $y\in\R$. Now, as $\hat g\in H^{q/2}(\R)$, we have $(1+y^2)^{q/2}|g(y)|^2\in L^1(\R)$, and $p < 2$ implies $q/2>1$. Thus, also $(1+y^2)|g(y)|^2\in L^1(\R)$, and the proof is complete.
\end{proof}

\begin{prob}\label{prob:open}
As already mentioned, we have restricted ourselves to rational lattices in Theorem \ref{t:main-rational} while Theorem \ref{t:main-rational_density} considers a broader class of lattices, namely the lattices of rational density. The main reason for this is that we could not prove whether the set $\{ g \in L^2(\R) : Zg \in\VMO_{\rm loc}(\R^2) \}$ is invariant under irrational dilations. If this were true, Theorem \ref{t:main-rational} would hold not only for rational lattices but for arbitrary lattices of rational density.
We leave the following as open problems:
\begin{enumerate}
\item Is it true that $Zg\in\VMO_{\rm loc}(\R^2)$ for $g\in L^2(\R)$ implies $Z(D_\alpha g)\in\VMO_{\rm loc}(\R^2)$ for every $\alpha\in\R\backslash\Q$?
\item Is there a good description of the space consisting of the functions $g\in L^2(\R)$ that satisfy $Zg\in\VMO_{\rm loc}(\R^2)$?
\end{enumerate}
\end{prob}

\bigskip\noindent
{\bf Acknowledgements.} D.G.~Lee and G.E.~Pfander acknowledge support by the DFG Grants PF 450/6-1 and PF 450/9-1. The authors would like to thank S. Zubin Gautam, Jeffrey A. Hogan, and David F. Walnut for valuable discussions. They also thank the referees for having read the manuscript carefully and their various valuable comments.

\vspace{1cm}
\section*{Author affiliations}
\end{document}